\documentclass[11pt]{article}
\usepackage{float}

\usepackage{amsmath,amssymb,amsthm}
\usepackage{mathtools}
\usepackage{booktabs}
\usepackage{graphicx}
\usepackage{hyperref}
\usepackage{xcolor}
\usepackage[left=2cm, right=2cm, top=3cm, bottom=3cm]{geometry}

\newtheorem{theorem}{Theorem}
\newtheorem{lemma}[theorem]{Lemma}
\newtheorem{proposition}[theorem]{Proposition}
\newtheorem{remark}{Remark}
\newtheorem{definition}{Definition}

\newcommand{\Omg}{\Omega}
\newcommand{\bOmg}{\partial\Omega}
\newcommand{\C}{\mathcal{C}}
\newcommand{\Hone}{H^1(\Omega)}
\newcommand{\Hmone}{H^{-1}(\Omega)}
\newcommand{\Honehalf}{H^{1/2}(\partial\Omega)}

\newcommand{\norm}[1]{\left\|#1\right\|}
\newcommand{\seminorm}[1]{\left|#1\right|}
\newcommand{\mtil}{\widetilde{m}}
\newcommand{\Xint}{X}
\newcommand{\Zbnd}{Z}
\newcommand{\R}{\mathbb{R}}

\title{\Large\bf  Consistent CutPINNs for Elliptic PDEs on Curved Level-Set Domains}
\author{Maneesh Kumar Singh\thanks{Department of Mathematics, SRM Institute of
Science and Technology, Kattankulathur, India. Email:
\texttt{maneeshs@srmist.edu.in}}}

\date{\today}

\begin{document}
% =============================================================================

\maketitle

%\begin{abstract}
% Standard Physics-Informed Neural Networks (PINNs) penalise the interior
% PDE residual and boundary mismatch in $L^2$, a norm mismatch with the
% $H^1(\Omega)$ energy norm that governs well-posedness for second-order
% elliptic problems. The consistent PINN framework of \cite{bonito2025} closes this gap on flat domains via a discrete
% $H^{1/2}(\partial\Omega)$ surrogate built from the Kuhn--Tucker
% simplicial decomposition of $(0,1)^d$. We extend this framework to
% curved level-set domains $\Omega = \{\varphi < 0\} \subset \mathbb{R}^2$,
% where no such simplicial decomposition exists, via a \textit{Chord-arc} reduction
% that transfers the discrete boundary norm to a periodic interval with
% constants depending only on the maximum curvature. Combined with a
% cut-domain $L^2$ discretisation, this gives an \emph{a priori} bound on
% the $H^1$ error at the optimal recovery rate. Experiments on a disk and
% a non-convex flower confirm a $5$--$8$-fold error reduction and
% $26\times$ improvement in robustness to cut-cell configurations.
%\end{abstract}

\begin{abstract}
We propose \emph{Consistent CutPINN}, a framework for partial
differential equations posed on bounded curved domains defined
implicitly by a $\C^2$ level-set function, $\Omega = \{\varphi < 0\}$. In this paper we develop the framework for
second-order elliptic problems in two dimensions. The standard PINN
loss penalises the boundary mismatch in $L^2(\partial\Omega)$, but
$L^2(\partial\Omega)$ does not control the $H^{1/2}(\partial\Omega)$
trace norm that appears in the $H^1(\Omega)$ energy estimate. The
consistent PINN framework of Bonito et al.~\cite{bonito2025}
fixes this on the unit cube $(0,1)^d$ via a Kuhn--Tucker simplicial
decomposition of the flat boundary faces, but the construction relies
on the affine structure of the faces and does not carry over to
smooth curved boundaries. We address this gap. Specifically, (i) we
introduce a discrete $H^{1/2}(\partial\Omega)$ surrogate built
directly from collocation points on a $\C^2$ curve, (ii) we prove a
\textit{Chord-arc} norm equivalence between this surrogate and the continuous
trace norm, (iii) we establish an \emph{a priori} $H^1$ error
bound on cut domains, and (iv) we derive convergence rates under
Besov regularity using optimal recovery theory. Numerical experiments
on a disk and a non-convex flower domain confirm that the consistent
loss is much more accurate than the standard PINN loss and far more
robust to cut-cell configurations.
\end{abstract}

\textbf{Keywords:}  physics informed neural networks; consistent PINNs, elliptic pde, curved domains, optimal
recovery

\section{Introduction}
\label{sec:intro}

Physics-informed neural networks (PINNs) were introduced by Raissi,
Perdikaris, and Karniadakis~\cite{raissi2019} as a way to solve
partial differential equations by training a neural network to
minimise a loss that penalises the PDE residual and the boundary
mismatch at scattered collocation points. Because the method is
meshfree, it is naturally suited to problems on curved domains,
where standard finite element or finite difference codes need
careful mesh generation and refinement near the boundary.

The theoretical literature on PINNs has grown steadily. Shin et
al.~\cite{shin2020convergence} proved convergence for second-order
elliptic and parabolic equations. Generalisation error estimates
were obtained in~\cite{mishra2023estimates, de2024numerical}, and
architecture-independent approximation bounds
in~\cite{de2022generic}. A broader treatment of neural network
approximation in the PDE context is~\cite{devore2021neural}.
Variants of the basic formulation have been studied extensively:
domain-decomposition extensions~\cite{hu2022extended}, variational
reformulations~\cite{anandh2025fastvpinns, anandh2025improving},
stabilised schemes for convection-dominated
problems~\cite{cengizci2026pinn, raina2026application}, hybrid
time-stepping for phase-field
equations~\cite{singh2024ss, singh2026mc}, and adaptive
collocation~\cite{visser2026pacmann}. Despite this work, direct
comparisons with classical finite element
solvers~\cite{grossmann2023pinn} show that standard PINNs are often
less accurate per degree of freedom, in particular when the
solution has boundary layers or high-frequency oscillation.

One reason for this underperformance is a norm mismatch in the
standard PINN loss, identified and analysed by Bonito et
al.~\cite{bonito2025}. The standard loss penalises the interior
residual in $L^2(\Omg)$ and the boundary mismatch in $L^2(\bOmg)$.
However, well-posedness of a second-order elliptic problem is
governed by the $H^1(\Omg)$ energy norm. The relevant \emph{a priori}
estimate controls $\norm{u-v}_{H^1(\Omg)}$ by the interior residual
in $H^{-1}(\Omg)$ and the boundary residual in $H^{1/2}(\bOmg)$. The
embedding $H^{1/2}(\bOmg)\subset L^2(\bOmg)$ is strict, so the
$L^2(\bOmg)$ penalty does not control the $H^{1/2}(\bOmg)$ trace
norm, and the standard loss is in this sense \emph{inconsistent}:
driving it to zero gives no guarantee on the $H^1(\Omg)$ error. A
standard counterexample is a high-frequency oscillation that
vanishes at the boundary collocation points but has arbitrarily
large $H^{1/2}(\bOmg)$ seminorm. The connection between this
phenomenon and optimal recovery under Besov regularity has been
explored in~\cite{devore1993besov, cohen2022optimal}, and related
inconsistency issues are discussed
in~\cite{zeinhofer2025unified, frerichs2026loss}.

Bonito et al.~\cite{bonito2025} addressed this by designing
\emph{consistent} loss functions. The boundary penalty is replaced
by a discrete surrogate of the $H^{1/2}(\bOmg)$ norm built from a
double-sum discretisation of the intrinsic seminorm, and the
interior $L^2(\Omg)$
term by a discrete $H^{-1}(\Omg)$ surrogate. The resulting loss is
norm-equivalent to the $H^1(\Omg)$ error through the classical
energy estimate, and the authors prove optimal convergence rates
under Besov model class assumptions. The framework has since been
used for higher-order elliptic PDEs~\cite{mishra2026consistent} and
for variational inequalities~\cite{khan2026mixed}. There is one
structural restriction common to all of this work: the discrete
$H^{1/2}(\bOmg)$ norm of~\cite{bonito2025} is built from the
Kuhn--Tucker simplicial decomposition of the flat boundary faces of
the unit cube $(0,1)^d$, and the proof uses the affine structure of
each face. The analysis does not transfer to boundaries that are
smooth closed curves rather than unions of flat segments.

A related but methodologically different line of work is the
unfitted finite element interpolated neural network
($\ell^2$-FEINN) framework of Li et al.~\cite{li2026}, which
combines a neural network parameterisation with aggregated CutFEM
test spaces~\cite{badia2018aggregated}. Unlike our setting, this
approach uses an underlying finite element mesh and sits outside
the optimal-recovery framework of~\cite{bonito2025} that we adopt,
so we do not include a direct comparison. A broader survey of
PINN approaches to complex geometries can be found
in~\cite{plankovskyy2025review}.

No existing framework combines consistent loss functions with
implicitly defined curved domains. We construct one. The setting
is a bounded two-dimensional domain $\Omg = \{\varphi < 0\}$
defined by a $\C^2$ level-set function $\varphi$, and we build the
discrete $\Honehalf$ norm directly from collocation points on the
curve, using the ambient Euclidean kernel
$|g(z)-g(z')|^2/|z-z'|^d$, with no underlying mesh or finite
element structure. We call the resulting method the
\emph{Consistent CutPINN framework}.

Let $\Omg \subset \R^2$ be a bounded domain given by a level-set
function $\varphi\colon \R^2 \to \R$:
\[
  \Omg = \bigl\{x \in \R^2 : \varphi(x) < 0\bigr\},
  \qquad
  \bOmg = \bigl\{x \in \R^2 : \varphi(x) = 0\bigr\}.
\]
Consider the Poisson problem with Dirichlet data,
\begin{equation}\label{eq:poisson}
  -\Delta u = f \quad \text{in } \Omg,
  \qquad
  u = g \quad \text{on } \bOmg,
\end{equation}
with $f \in \Hmone$ and $g \in \Honehalf$. Under standard
assumptions on $\Omg$ the problem has a unique weak solution
$u \in \Hone$, and for all $v \in \Hone$ one
has the two-sided bound
\begin{equation}\label{eq:energy_equiv}
  \norm{u - v}_{\Hone}
  \;\asymp\;
  \norm{f + \Delta v}_{\Hmone}
  + \norm{g - v}_{\Honehalf},
\end{equation}
see~\cite{bonito2025}. A consistent loss is one that approximates
the right-hand side of~\eqref{eq:energy_equiv}: if a computable
surrogate of it can be driven to zero during training, then the
$H^1$ error on the left must also go to zero.

The main theoretical result is the \emph{a priori} bound of
Theorem~\ref{thm:apriori}: minimising $L^*_{\mathrm{sq},2}$
controls the $H^1(\Omg)$ error at the optimal recovery rate, with
constants depending on the maximum curvature of $\bOmg$ through
the \textit{Chord-arc} constant. The key technical ingredient behind this
bound is Theorem~\ref{thm:h12_curve}, a two-sided equivalence
between the discrete and continuous $H^{1/2}(\bOmg)$ norms on a
$\C^2$ curve for arc-length-equidistant collocation points, with an
explicit recovery rate $m^{-\beta}$ set by the Besov smoothness of
the boundary datum.

We test the framework numerically on two level-set domains, a disk
and a non-convex flower with five-fold symmetry. The experiments
cover three aspects. Convergence is studied as the interior budget
$\mtil$ grows on the disk (Section~\ref{sec:exp1}) and on the
flower (Section~\ref{sec:exp4}). The corresponding training
histories at a fixed budget (Section~\ref{sec:exp2}) show whether
the loss tracks the $H^1$ error. Robustness is studied by moving
the disk centre through 121 positions along the bounding-box
anti-diagonal (Section~\ref{sec:exp3}), following~\cite{li2026},
and by replacing the equally-spaced boundary sampling with
{\em i.i.d} Monte Carlo sampling (Section~\ref{sec:exp6}). Finally,
the spatial distribution of the pointwise error on both domains is
shown in Section~\ref{sec:exp5}.
% In all settings, $L^*_{\mathrm{sq},2}$ achieves at least $5\times$
% lower $H^1$ error than the standard loss $L_{\mathrm{sq},1}$.

The paper is organised as follows. Section~\ref{sec:formulation}
fixes notation for the function spaces and sets up the PINN
pipeline on a level-set domain. Section~\ref{sec:losses} writes
down the four loss functions and defines the discrete $H^{1/2}$
norm on a curved boundary. The \textit{Chord-arc} lemma, the
norm-equivalence theorem, and the \emph{a priori} error bound are
proved in Section~\ref{sec:theory}. Section~\ref{sec:numerics}
reports the numerical experiments, and
Section~\ref{sec:conclusion} discusses limitations and future
work.

\section{Problem Formulation}
\label{sec:formulation}

This section collects the analytic and computational ingredients
used in the rest of the paper. Section~\ref{sec:functional} fixes
notation for the Sobolev and Besov spaces. We next set up the PINN
training pipeline, namely the collocation data, the trial network,
and the automatic differentiation of the Laplacian. The last
subsection describes the geometric setup of $\C^2$ level-set
boundaries that motivates the consistent loss design of
Section~\ref{sec:losses}.

\subsection{Function spaces}\label{sec:functional}
 
We recall the function spaces used throughout.
For a bounded Lipschitz domain $\Omg\subset\R^2$,
the Sobolev space $H^1(\Omg)$ consists of $L^2$ functions
with square-integrable first derivatives, equipped with the norm
$\norm{v}_{H^1(\Omg)}^2 := \norm{v}_{L^2(\Omg)}^2
+ \norm{\nabla v}_{L^2(\Omg)}^2$.
The dual space $H^{-1}(\Omg) := (H^1_0(\Omg))^*$ satisfies
the continuous embedding
\begin{equation}\label{eq:L2_embed}
  \norm{w}_{H^{-1}(\Omg)} \le C_P\,\norm{w}_{L^2(\Omg)},
  \qquad w \in L^2(\Omg),
\end{equation}
where $C_P$ is the Poincar\'e constant of $\Omg$. 
The \textbf{fractional Sobolev space} $H^{1/2}(\bOmg)$ is defined
via the semi-norm
\begin{equation}\label{eq:gagliardo_cont}
  |g|^2_{H^{1/2}(\bOmg)}
  := \int_{\bOmg}\int_{\bOmg}
     \frac{|g(z) - g(z')|^2}{|z - z'|^d}\,
     d\sigma(z)\,d\sigma(z'),
\end{equation}
with full norm
$\norm{g}^2_{H^{1/2}(\bOmg)}
:= \norm{g}^2_{L^2(\bOmg)} + |g|^2_{H^{1/2}(\bOmg)}$.
This is the natural trace space for $H^1(\Omg)$:
any $v\in H^1(\Omg)$ satisfies
$\norm{v|_{\bOmg}}_{H^{1/2}(\bOmg)}
\le C_{\mathrm{tr}}\,\norm{v}_{H^1(\Omg)}$.
For $s > 0$ and $1 \le p \le \infty$, the \textbf{Besov space}
$B^s_p(\Omg)$ consists of functions in $L^p(\Omg)$ with
finite seminorm
\begin{equation}\label{eq:besov_def}
  |f|_{B^s_p(\Omg)}
  := \sup_{k \ge 0}\; 2^{ks}\,
     \omega_r(f, 2^{-k})_{L^p(\Omg)},
\end{equation}
where $r > s$ is any integer and
$\omega_r(f,t)_{L^p(\Omg)}
:= \sup_{|h|\le t}
\norm{\Delta^r_h f}_{L^p(\Omg_{rh})}$
is the $r$-th modulus of smoothness with
$\Omg_{rh} := \{x\in\Omg : [x, x+rh]\subset\Omg\}$.
The norm is
$\norm{f}_{B^s_p(\Omg)} :=
\norm{f}_{L^p(\Omg)} + |f|_{B^s_p(\Omg)}$.
We write $U(B^s_p(\Omg))$ for the unit ball of $B^s_p(\Omg)$.
When $s > 2/p$ (and $d=2$), the embedding
$B^s_p(\Omg) \hookrightarrow \C(\Omg)$ holds,
so point evaluation is well-defined for functions in $B^s_p$.
We use the standard notation $t_+ := \max(t, 0)$ for the
positive part of a real number $t$.
For a detailed treatment of Besov spaces on domains,
see~\cite{devore1993besov} and references therein.

Throughout this paper, $C$ denotes a generic
positive constant that may depend on $r, \bar s, s, p, \kappa_{\max},
L, |\Omega|$, and the Stein extension constant $C_E$, but never on
$m, \tilde m$, the solution $u$, or the trial function $v$. Its value
may change from line to line. We write $a \lesssim b$ to mean
$a \le C b$ for such a constant $C$, and $a \asymp b$ to mean both
$a \lesssim b$ and $b \lesssim a$.

\subsection{Collocation data}

In the PINN setting the data $f$ and $g$ are never used in full.
The algorithm sees only their values at a finite set of points.
The method is meshfree in this sense: no quadrature rule or finite
element assembly is required, and the same code applies to any
domain for which a level-set function is available. Approximation
quality then depends entirely on how well the collocation points
sample the domain and its boundary, which is the question studied
classically in approximation theory and optimal
recovery~\cite{cohen2022optimal, devore1993besov}.

The method accesses $\mathbf{f} = (f_1,f_2,\ldots,f_{\mtil})$ and
$\mathbf{g} = (g_1,g_2,\ldots,g_{m})$ only at the
\emph{collocation sites}
\[
  \Xint = \{x_1,\ldots,x_{\mtil}\} \subset \Omg,
  \qquad
  \Zbnd = \{z_1,\ldots,z_m\} \subset \bOmg.
\]
Interior points are produced by rejection sampling from the bounding
box $[0,1]^2$, retaining only those satisfying $\varphi(x)<0$.
This is the standard strategy for implicitly defined domains and
requires no mesh. The only input is the level-set function,
cf.~\cite{li2026}. Boundary points are placed parametrically
on $\bOmg$: uniformly in arc-length for the disk, and via the exact
polar parametrisation for the flower domain. Parametric boundary
sampling ensures that points are equally spaced in arc-length, which
is the hypothesis required by the norm-equivalence
Theorem~\ref{thm:h12_curve}.

The interior and boundary collocation counts are linked by
$m = \lfloor\sqrt{\mtil}\rfloor$, giving the balanced scaling
$m \asymp \mtil^{1/(d-1)}$, as recommended
in~\cite{bonito2025}. For the four values $\mtil \in \{100, 400,
900, 1600\}$ used in the convergence experiments, the corresponding
boundary counts are $m \in \{10, 20, 30, 40\}$. Figure~\ref{fig:pipeline}
illustrates the collocation setup and the full network training pipeline.

% %% ── FIGURE 1: domain + pipeline 
\begin{figure}[htb!]
\centering
\includegraphics[width=\textwidth]{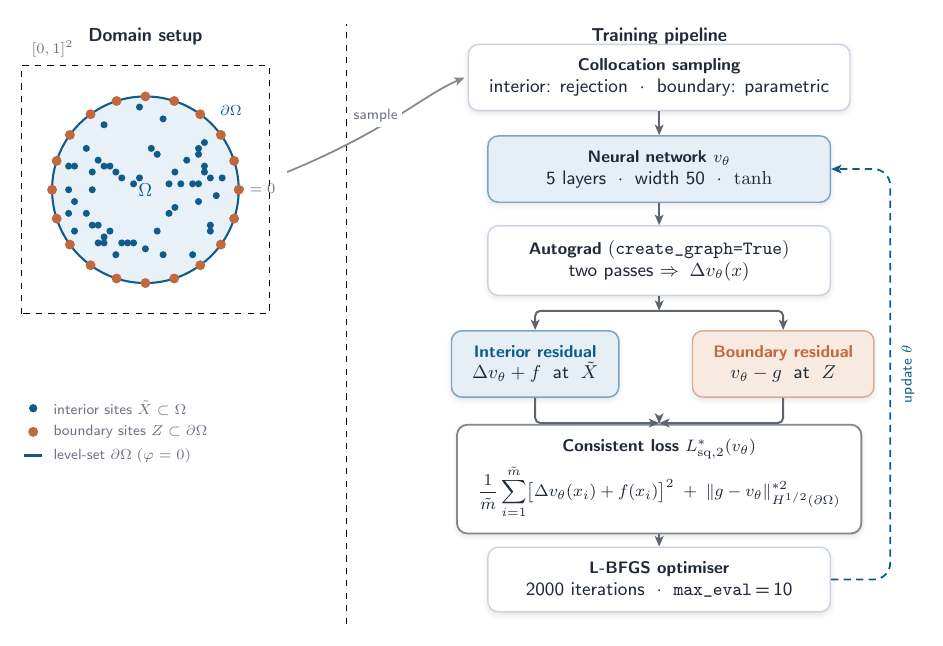}
\caption{Left: collocation setup on the cut domain
$\Omega=\{\varphi<0\}$.
Interior sites $\tilde{X}$ (blue, 55 shown) are drawn by rejection
sampling; boundary sites $Z$ (red, 20 shown) are placed parametrically
on $\partial\Omega$.
Right: training pipeline from collocation sampling through the neural
network $v_\theta$, automatic differentiation of the Laplacian
$\Delta v_\theta$, assembly of the consistent loss
$L^*_{\mathrm{sq},2}$, and L-BFGS optimisation.
The dashed arrow denotes the parameter update feedback loop.}
\label{fig:pipeline}
\end{figure}

The trial function $v_\theta\colon\R^2\to\R$ is a fully-connected
feedforward network with $L=5$ hidden layers, width $W=50$, and
$\tanh$ activations, giving $10{,}401$ trainable parameters. It takes
the two-dimensional coordinate $(x_1,x_2)$ as input and returns a
scalar. Parameters are initialised by the default PyTorch scheme
(Kaiming uniform). The network is trained by minimising one of
the loss functions defined in Section~\ref{sec:losses}.

\paragraph{Implementation details.} In our setup, 
at each L-BFGS step, $v_\theta$ is evaluated at the collocation sites
$\tilde{X}$ and $Z$. The Laplacian $\Delta v_\theta(x_i)$ is computed
via two backward passes of automatic differentiation with
\texttt{create\_graph=True}: the first pass gives $\nabla v_\theta$
and retains the computational graph, the second differentiates
$\nabla v_\theta$ to recover the second derivatives. The discrete
$H^{1/2}(\partial\Omega)$ double sum~\eqref{eq:gagliardo_cont} is
assembled in vectorised form, with cost $O(m^2)$ per evaluation,
dominated for $m \le 40$ by the forward and backward passes through
$v_\theta$. The \texttt{torch.optim.LBFGS} optimiser uses a history
of 10 past gradient updates with strong Wolfe line search. The
manufactured-solution data $(f, g)$ are precomputed once and stored
as torch tensors, so each training step is a pure forward-backward
pass with no host-side computation.

\subsection{Extension to curved boundaries}

Existing consistent PINN constructions provide discrete surrogates
$L^*$ of the right-hand side of~\eqref{eq:energy_equiv} that
preserve the norm equivalence up to a discretisation error. The
construction sits on the Kuhn--Tucker simplicial decomposition of
the flat boundary faces of $(0,1)^d$, summing the intrinsic-seminorm
kernel $|g(z) - g(z')|^2 / |z - z'|^d$ over the grid points on
these faces. The equivalence proof relies on the affine structure
of each face.

On a cut domain $\partial\Omega$ is a smooth closed curve and no
natural simplicial decomposition is available. Whether the same
kernel, evaluated on scattered points on the curve, still gives a
norm controlling the continuous $H^{1/2}$ trace is not obvious. On
a $C^2$ curve, however, the Euclidean chord distance is comparable
to the arc-length distance, with a positive constant $c_\Gamma$
depending on the curve. This chord-arc comparison is established in
Lemma~\ref{lem:chord_arc}.

Throughout the paper we assume the level-set function $\varphi$ is
$C^2$ with $|\nabla\varphi| > 0$ on $\partial\Omega$. The implicit
function theorem then guarantees that $\partial\Omega$ is a simple
closed $C^2$ curve with finite maximum curvature $\kappa_{\max}$. No
finite element mesh or boundary decomposition is required.

The experiments in Section~\ref{sec:numerics} confirm that the
consistent losses built this way produces $H^1$ errors five to eight
times smaller than the standard PINN loss across the collocation
budgets and geometries we test.

\section{Consistent Loss Functions}
\label{sec:losses}

The energy estimate~\eqref{eq:energy_equiv} suggests how to design
a loss whose minimisation controls the $H^1(\Omg)$ error: replace
the continuous norms $\norm{\cdot}_{H^{-1}(\Omg)}$ and
$\norm{\cdot}_{\Honehalf}$ on its right-hand side by computable
discrete surrogates that retain the norm equivalence up to a
discretisation error. We follow this strategy on a cut domain.
The interior term carries over from the flat-domain case, while
the boundary term needs a new discrete $\Honehalf$ norm built
from collocation points on a curved level-set boundary.

\subsection{Standard PINN losses}

We first record the standard PINN loss, which serves as the
baseline throughout the experiments. It does not try to match the
analytic structure of the energy estimate~\eqref{eq:energy_equiv}. 
The interior residual and the boundary mismatch are treated as two
independent least-squares terms, both in $L^2$. The formulation is
simple and is used in most PINN
implementations~\cite{raissi2019, cuomo2022}. The theoretical
shortcoming, namely that the $L^2(\bOmg)$ penalty does
not control the $\Honehalf$ trace norm, is what the consistent
losses below fix.

The original loss~\cite{raissi2019} with equal weighting is
\begin{equation}\label{eq:Lsq1}
  L_{\mathrm{sq},1}(v)
  = \frac{1}{\mtil}\sum_{i=1}^{\mtil}
    \bigl[\Delta v(x_i) + f(x_i)\bigr]^2
  + \frac{1}{m}\sum_{j=1}^{m}
    \bigl[v(z_j) - g(z_j)\bigr]^2.
\end{equation}
A weighted variant with $\lambda(m) = m^{1/(d-1)}$ is also
considered\footnote{Our convention places the factor $1/m$
explicitly on the boundary sum, in contrast to the formulation
in~\cite{bonito2025} eq.~(8.5) which absorbs it into the weight.
The two conventions coincide for $d=2$, the only case treated here;
for $d\geq 3$ they differ by an overall factor of $m$.}:
\begin{equation}\label{eq:Lsqlam}
  L_{\mathrm{sq},\lambda}(v)
  = \frac{1}{\mtil}\sum_{i=1}^{\mtil}
    \bigl[\Delta v(x_i) + f(x_i)\bigr]^2
  + m^{(2-d)/(d-1)}
    \frac{1}{m}\sum_{j=1}^{m}
    \bigl[v(z_j) - g(z_j)\bigr]^2.
\end{equation}
In two dimensions the exponent $(2-d)/(d-1)$ vanishes, so
$L_{\mathrm{sq},\lambda} \equiv L_{\mathrm{sq},1}$ identically. We
include both in the experiments as a sanity check, and indeed they
agree to machine precision in every run.

\subsection{Discrete $H^{1/2}$ norm on a curved boundary}

The \textit{intrinsic semi-norm} records how much a function
oscillates between pairs of boundary points, with the variation
weighted by the inverse-square distance. On flat domains the
integral is discretised over a structured grid on the boundary, and
the simplicial geometry of the flat faces is what gives the norm
equivalence. On a curved boundary no such grid exists. The natural
question is whether the same kernel, evaluated on collocation
points $\Zbnd = \{z_1,\ldots,z_m\}$ on the curve with the ambient
Euclidean distance, still controls the continuous $\Honehalf$
trace norm. This is the question of Section~\ref{sec:theory}, and
the answer is yes for $\C^2$ level-set boundaries in $d=2$. The
semi-norm on $\bOmg$ is
\begin{equation}\label{eq:h12_cont}
  \seminorm{g}^2_{H^{1/2}(\bOmg)}
  = \int_{\bOmg}\int_{\bOmg}
    \frac{|g(z)-g(z')|^2}{|z-z'|^d}\,dz\,dz'.
\end{equation}
We discretise this integral by replacing it with a double sum over
the collocation points $\Zbnd$ on the curve.

\begin{definition}[\textbf{Discrete $H^{1/2}$ norm on $\bOmg$}]
\label{def:h12}
Write $r_j = g(z_j) - v(z_j)$ for the boundary residual. Set
\begin{align}
  \norm{g-v}^{*2}_{L^2(\bOmg)}
  &= \frac{1}{m}\sum_{j=1}^{m} r_j^2,
  \label{eq:h12_l2}\\[4pt]
  \seminorm{g-v}^{*2}_{H^{1/2}(\bOmg)}
  &= \frac{1}{m^2}\sum_{\substack{i,j=1\\i\neq j}}^{m}
    \frac{(r_i - r_j)^2}{|z_i - z_j|^d},
  \label{eq:h12_semi}\\[4pt]
  \norm{g-v}^{*2}_{H^{1/2}(\bOmg)}
  &= \norm{g-v}^{*2}_{L^2(\bOmg)}
  + \seminorm{g-v}^{*2}_{H^{1/2}(\bOmg)}.
  \label{eq:h12_full}
\end{align}
\end{definition}

\begin{remark}\label{rem:equiv}
On the flat faces of $(0,1)^d$ the equivalence between the discrete
norm~\eqref{eq:h12_full} and the true $\Honehalf$ norm is proved
in~\cite{bonito2025} (Lemma~6.2 and Theorem~6.3) using the
Kuhn-Tucker simplicial structure of $\bOmg$. In the next
section,  we will extend this equivalence to smooth
curved boundaries, using a \textit{Chord-arc} argument for $d=2$.
\end{remark}

\subsection{Consistent losses}

We define two consistent losses, differing only in how the interior
residual is measured. The first uses an $L^\gamma$ interior penalty
with $\gamma = 1 + 1/\log\mtil$. This choice is motivated by the
embedding $L^\gamma(\Omg) \hookrightarrow H^{-1}(\Omg)$, which holds
for $\gamma \geq 2d/(d+2)$. In $d=2$ this gives $\gamma \geq 1$, and
$\gamma = 1 + 1/\log\mtil$ approaches this lower bound as the
collocation budget grows. The second variant retains the $L^2$
interior penalty of the standard PINN loss and replaces only the
boundary term by the discrete $\Honehalf$ norm.

The first consistent loss reads
\begin{equation}\label{eq:Lcg}
  L^*_{\mathrm{sq},\gamma}(v)
  = \left(\frac{1}{\mtil}\sum_{i=1}^{\mtil}
    \bigl|\Delta v(x_i)+f(x_i)\bigr|^\gamma\right)^{2/\gamma}
  + \norm{g-v}^{*2}_{H^{1/2}(\bOmg)},
\end{equation}
which expands explicitly as
\begin{align}
  L^*_{\mathrm{sq},\gamma}(v)
  :={}&
  \left[
    \frac{1}{\mtil}\sum_{i=1}^{\mtil}
    \bigl|f(x_i) + \Delta v(x_i)\bigr|^{\gamma}
  \right]^{2/\gamma}
  +
  \frac{1}{m^2}
  \sum_{\substack{i,j=1\\i\neq j}}^{m}
  \frac{\bigl|[g-v](z_i)-[g-v](z_j)\bigr|^2}
       {|z_i-z_j|^{2}}
  \nonumber\\[1ex]
  &+\;
  \frac{1}{m}\sum_{j=1}^{m}
  \bigl|g(z_j)-v(z_j)\bigr|^{2}.
\end{align}
The second consistent loss keeps the $L^2$ interior term:
\begin{equation}\label{eq:Lc2}
  L^*_{\mathrm{sq},2}(v)
  = \frac{1}{\mtil}\sum_{i=1}^{\mtil}
    \bigl[\Delta v(x_i)+f(x_i)\bigr]^2
  + \norm{g-v}^{*2}_{H^{1/2}(\bOmg)}.
\end{equation}
The consistent loss $L^*_{\mathrm{sq},2}$ modifies the standard loss
$L_{\mathrm{sq},1}$ only in the boundary term, replacing the
$L^2(\bOmg)$ penalty by the discrete $\Honehalf$ norm. This is the
minimal modification that makes the loss consistent. The two
consistent losses share this boundary term and differ only in the
interior penalty, which is $L^\gamma$ in $L^*_{\mathrm{sq},\gamma}$
and $L^2$ in $L^*_{\mathrm{sq},2}$.

%% ── FIGURE 2: network architecture 
\begin{figure}[!t]
  \centering
  \includegraphics[width=\textwidth]{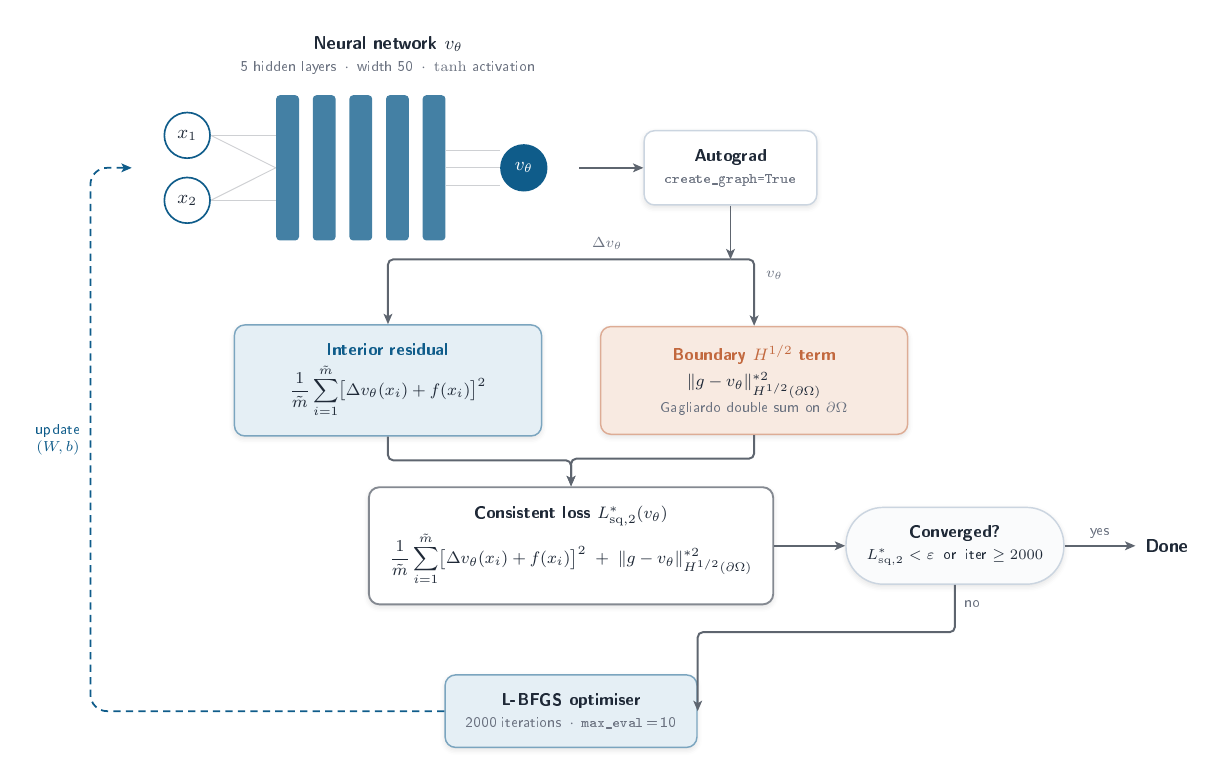}
  \caption{Decomposition of the consistent loss $L^*_{\mathrm{sq},2}$
  defined in~\eqref{eq:Lc2}. The network output $v_\theta$ and its
  Laplacian $\Delta v_\theta$ feed two residual terms: the
  interior $L^2$ residual at $\tilde X$ and the boundary $H^{1/2}$
  residual at $Z$, which combine into the scalar loss. Training
  stops when $L^*_{\mathrm{sq},2}$ falls below a tolerance
  $\varepsilon$ or 2000 L-BFGS iterations are reached.}
 \label{fig:network}
\end{figure}

\subsection{Optimal recovery framework}\label{sec:OR}
 
The consistent loss is designed to achieve the best possible
approximation rate given the available point samples. We make this
precise via the optimal recovery framework.
 
Assume the input data are constrained to the unit balls
\begin{equation}\label{eq:OR_classes}
  f \in \mathcal{F} := U(B^s_p(\Omg)),
  \quad s > 2/p,\;1\le p\le\infty,
  \qquad
  g \in \mathcal{G} := \mathrm{Tr}\bigl(
    U(B^{\bar{s}}_\infty(L^2(\Omg)))\bigr),
  \quad \bar{s} > 1.
\end{equation}
Since $\mathcal{F}$ is compactly embedded in $\C(\Omg)$
and $\mathcal{G}$ embeds into $\C(\bOmg)$,
the solution $u$ of~\eqref{eq:poisson} lies within a
compact subset of $H^1(\Omg)$:
\[
  \mathcal{U} := \bigl\{u \in \C(\Omg) \;\big|\;
    u \text{ solves \eqref{eq:poisson} for }
    f\in\mathcal{F},\; g\in\mathcal{G}\bigr\}.
\]
 
For sampled values
$\hat{f} = (\hat{f}_1,\ldots,\hat{f}_{\mtil})$ and
$\hat{g} = (\hat{g}_1,\ldots,\hat{g}_m)$,
the accessible knowledge of $f$ and $g$ is collected as
\[
  \mathcal{F}_{\mathrm{data}}
  := \{f\in\mathcal{F} : f(x_i) = \hat{f}_i,\;
       i=1,\ldots,\mtil\},
  \qquad
  \mathcal{G}_{\mathrm{data}}
  := \{g\in\mathcal{G} : g(z_j) = \hat{g}_j,\;
       j=1,\ldots,m\}.
\]
The solution is then constrained to the admissible set
\[
  \mathcal{U}_{\mathrm{data}}
  := \bigl\{u \in \mathcal{U}
    \;\big|\;
    -\Delta u(x_i) = \hat{f}_i,\;
    u(z_j) = \hat{g}_j,\;
    i=1,\ldots,\mtil,\;
    j=1,\ldots,m
  \bigr\}.
\]
 
The uniform optimal recovery rate over all sampling
configurations of size $|\tilde{X}| + |Z| = \mtil + m$ is
\[
  R^*_{\mtil,m}(\mathcal{U})_{H^1(\Omg)}
  := \inf_{\substack{\tilde{X}\subset\Omg,\;
     Z\subset\bOmg \\ |\tilde{X}|=\mtil,\;|Z|=m}}
     \;\sup_{u\in\mathcal{U}}\;
     R(\mathcal{U}_{\mathrm{data}}(u))_{H^1(\Omg)}.
\]
For the model classes $\mathcal{F}=U(B^s_p(\Omg))$ and
$\mathcal{G}=\mathrm{Tr}(U(B^{\bar{s}}_\infty(L^2(\Omg))))$ in $d=2$,
\cite[Theorems~3.1 and~3.2]{bonito2025} give the unrestricted
optimal recovery rate
$R^*_{\mtil,m}(\mathcal{U})_{H^1(\Omg)}\asymp\mtil^{-\alpha_{\rm OR}}
+m^{-\beta}$ with $\alpha_{\rm OR}=s/2$ for $p>1$ and
$\beta=\bar{s}-1$, where the lower bound is established for flat
Lipschitz domains and extends to bounded Lipschitz domains by
bi-Lipschitz change of variables. Methods that control the interior
residual through the embedding $L^2(\Omg)\hookrightarrow
H^{-1}(\Omg)$ realise the slightly weaker rate
\begin{equation}\label{eq:OR_rate}
  R^*_{\mtil,m}(\mathcal{U})_{H^1(\Omg)}
  \;\lesssim\; \mtil^{-\alpha} + m^{-\beta},
  \qquad
  \alpha = \tfrac{s}{2}
    - \bigl(\tfrac{1}{p}-\tfrac{1}{2}\bigr)_+,
  \quad
  \beta = \bar{s} - 1,
\end{equation}
which coincides with the unrestricted rate when $p\ge 2$. The
\emph{a priori} estimate in Theorem~\ref{thm:apriori} establishes
that the consistent loss $L^*_{\mathrm{sq},2}$ achieves the rate
in~\eqref{eq:OR_rate}, which is minimax optimal over
$\mathcal{F}$ and $\mathcal{G}$ for $p\ge 2$ and within a factor
$\mtil^{-(1/p-1/2)}$ of the lower bound otherwise. The experiments
of Section~\ref{sec:numerics} use $C^\infty$ data, for which the
two rates coincide.
 
We refer to the right-hand side of the energy
estimate~\eqref{eq:energy_equiv} as the
\emph{theoretical loss}
$\mathcal{L}_T(v)
:= \norm{f+\Delta v}_{H^{-1}(\Omg)}
  + \norm{g - v}_{H^{1/2}(\bOmg)}$.
The consistent loss $L^*_{\mathrm{sq},2}$ is a computable
surrogate for $\mathcal{L}_T$: by
Theorem~\ref{thm:apriori}, small values of
$L^*_{\mathrm{sq},2}(v)$ certify small values of
$\mathcal{L}_T(v)$, and hence small $H^1$ error.

\section{Error Analysis: norm and \emph{a priori} bounds}\label{sec:theory}

We introduce the piecewise polynomial machinery needed for
the proofs (\S\ref{sec:polyapprox}), establish that the
discrete $H^{1/2}(\bOmg)$ norm is equivalent to its
continuous counterpart on $\C^2$ level-set boundaries
(\S\ref{sec:norm_equiv}), and prove that
$L^*_{\mathrm{sq},2}$ controls the $H^1(\Omg)$ error at
the optimal recovery rate (\S\ref{sec:apriori}).

\subsection{Piecewise polynomial approximation}\label{sec:polyapprox}

Let $r > \max(\bar{s}, 1)$ denote a fixed polynomial degree. We assume
$r \mid m$ throughout and  the remainder case modifies constants only by factors depending on $r$. Let $\mathbb{T}_L := \mathbb{R}/L\mathbb{Z}$ denote the periodic interval of length $L$.

The $m$ samples $s_j = jL/m$, $j = 0, 1, \ldots, m-1$, lie on
$\mathbb{T}_L$. We partition the sample set into $m_0 := m/r$
consecutive blocks of $r+1$ points, with neighbouring blocks sharing
one endpoint, giving the coarse partition
$\tilde{I}_k = [\tilde{s}_{k-1}, \tilde{s}_k]$, $k = 1, \ldots, m_0$,
with block endpoints
\begin{equation}\label{eq:cell_endpoints}
  \tilde{s}_k = s_{kr} = k\tilde{h},
  \qquad
  \tilde{h} = rL/m,
\end{equation}
subject to the periodic identification $\tilde{s}_0 = \tilde{s}_{m_0}$.
Each cell $\tilde{I}_k$ contains $r+1$ sample points, sufficient to
determine a polynomial of degree $r$.

% Let $r > \max(\bar{s},1)$ be a fixed polynomial degree. We assume
% $r \mid m$ without loss of generality, since the remainder case alters
% constants only by factors depending on $r$.

% Let us introduce a space $\mathbb{T}_L := \mathbb{R}/L\mathbb{Z}$, the periodic interval of length $L$.

% The $m$ samples $s_j = jL/m$, $j = 0, 1, \ldots, m-1$, lie on the
% periodic interval $\mathbb{T}_L = [0,L]/(0 \sim L)$. Grouping them
% into $m_0 := m/r$ consecutive blocks of $r+1$ points, neighbouring
% blocks sharing one endpoint, we obtain the coarse partition
% $\tilde{I}_k = [\tilde{s}_{k-1}, \tilde{s}_k]$, $k = 1, \ldots, m_0$,
% with block endpoints
% \begin{equation}\label{eq:cell_endpoints}
%   \tilde{s}_k = s_{kr} = k\tilde{h},
%   \qquad
%   \tilde{h} = rL/m,
% \end{equation}
% periodic at $\tilde{s}_0 = \tilde{s}_{m_0}$. Each coarse cell
% $\tilde{I}_k$ contains $r+1$ sample points, enough to determine a
% polynomial of degree $r$.

Denote by $V^r(\mathbb{T}_L)$ the space of continuous piecewise
polynomials of degree at most $r$ on $\{\tilde{I}_k\}_{k=1}^{m_0}$,
periodic at the endpoints. For each coarse cell $\tilde{I}_k$, the
affine map $\phi_k\colon [0,1] \to \tilde{I}_k$,
$\phi_k(\xi) = \tilde{s}_{k-1} + \tilde{h}\xi$, pulls back any
$S \in V^r(\mathbb{T}_L)$ to a polynomial of degree $r$ on $[0,1]$.

Let $S_k\colon \C(\mathbb{T}_L) \to V^r(\mathbb{T}_L)$ denote the
quasi-interpolation operator of \cite[\S6.2]{bonito2025} in its
periodic 1D form.  Denote $(S_k G)\big|_{\tilde{I}_k}$ as the degree-$r$
Lagrange interpolant of $G$ at the $r+1$ sample points lying in
$\tilde{I}_k$. 

By construction, $S_k G$ interpolates $G$ at every
$s_j$, that is $G(s_j) = (S_k G)(s_j)$ for all $j$. The approximation
estimates
\begin{equation}\label{eq:quasi_interp}
  \norm{G - S_k G}_{\C(\mathbb{T}_L)}
    \le C\,\norm{G}_{\mathrm{Tr}(B)}\,m^{-\bar{s}},
  \qquad
  \norm{G - S_k G}_{H^{1/2}(\mathbb{T}_L)}
    \le C\,\norm{G}_{\mathrm{Tr}(B)}\,m^{-\beta},
\end{equation}
hold with $\beta = \bar{s} - 1$ and constants $C$ depending on $r$
and $L$ but not on $m$ or $G$. The $H^{1/2}$ rate is the periodic 1D
specialisation of \cite[Theorem~3.2 and~\S A.6.2]{bonito2025} with
$d = 2$ and $\bar{p} = 2$, while the $\C(\mathbb{T}_L)$ rate follows
from the embedding $\mathrm{Tr}(B^{\bar{s}}_\infty(L^2(\Omg)))
\hookrightarrow \C(\partial\Omg)$ combined with standard polynomial
approximation theory on quasi-uniform partitions
\cite{devore1993besov,BrennerScott2008}.

\subsection{Boundary norm equivalence}\label{sec:norm_equiv}

In $d=2$, $\bOmg$ is a one-dimensional closed curve. We adapt the
flat-domain argument of~\cite{bonito2025} (Lemma~6.2 and
Theorem~6.3) to this setting via an arc-length parametrisation,
using a \textit{Chord-arc} bound to relate Euclidean and arc-length
distances on the curve. The following lemma supplies this bound.

\begin{lemma}[\textbf{\textit{Chord-arc} equivalence}]\label{lem:chord_arc}
Let $\Gamma \subset \R^2$ be a simple closed $\C^2$ curve of length $L$,
parametrised by arc length $\gamma\colon [0,L] \to \Gamma$ with
$|\gamma'(s)|=1$. Let $\kappa_{\max} :=
\sup_{s\in[0,L]}|\kappa(s)|$ be the maximum curvature. Define the
periodic arc-length distance $d_\Gamma(s,t) :=
\min\bigl(|s-t|,\,L-|s-t|\bigr)$. Then there exists a constant
$c_\Gamma = c_\Gamma(\Gamma) \in (0,1]$ such that
\begin{equation}\label{eq:chord_arc}
  c_\Gamma\, d_\Gamma(s,t)
  \;\leq\; |\gamma(s) - \gamma(t)|
  \;\leq\; d_\Gamma(s,t)
\end{equation}
for all $s,t \in [0,L]$.
\end{lemma}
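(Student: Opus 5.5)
The upper bound is immediate. Since $|\gamma'|=1$, for $s\le t$ we have $|\gamma(s)-\gamma(t)| = \bigl|\int_s^t \gamma'(\tau)\,d\tau\bigr| \le t-s$. Going around the other way along the closed curve gives $|\gamma(s)-\gamma(t)|\le L-(t-s)$. Taking the minimum yields $|\gamma(s)-\gamma(t)|\le d_\Gamma(s,t)$.

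For the lower bound I will split into a local and a global regime, with threshold $\delta := \min\bigl(1/\kappa_{\max},\,L/2\bigr)$, and $\delta := L/2$ if $\kappa_{\max}=0$, which cannot actually occur for a closed curve.

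\emph{Local regime, $d_\Gamma(s,t)\le\delta$.} Take $h=d_\Gamma(s,t)$ and choose the representatives so that $t=s+h$. The angle between the tangents satisfies $|\gamma'(\tau)-\gamma'(s)|\le \kappa_{\max}|\tau-s|$, because $\gamma''=\kappa\, n$. Hence
\[
  \gamma'(\tau)\cdot\gamma'(s) \ge 1-\tfrac12\kappa_{\max}^2(\tau-s)^2 .
\]
A cleaner route writes $\gamma'(\tau)=(\cos\theta(\tau),\sin\theta(\tau))$ with $|\theta(\tau)-\theta(s)|\le\kappa_{\max}|\tau-s|\le 1$ on $[s,s+h]$. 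Then
\[
  |\gamma(t)-\gamma(s)| \ge (\gamma(t)-\gamma(s))\cdot\gamma'(s) = \int_0^h\cos(\theta(s+u)-\theta(s))\,du \ge h\cos 1 .
\]
So the local constant is $c_1=\cos 1>0$, and it depends on nothing.

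\emph{Global regime, $d_\Gamma(s,t)\ge\delta$.} Consider the continuous function
\[
  F(s,t)=|\gamma(s)-\gamma(t)|
\]
on the compact set $K=\{(s,t)\in\mathbb{T}_L^2: d_\Gamma(s,t)\ge\delta\}$. Because $\Gamma$ is simple, $\gamma(s)\ne\gamma(t)$ whenever $s\neq t$ in $\mathbb{T}_L$, so $F>0$ on $K$. By compactness $F$ attains a minimum $\mu>0$ on $K$. Using $d_\Gamma\le L/2$, we get $F(s,t)\ge \mu \ge (2\mu/L)\,d_\Gamma(s,t)$ on $K$.

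Finally I set $c_\Gamma := \min(\cos 1,\,2\mu/L,\,1)$, which lies in $(0,1]$.

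The main obstacle is the global step. There the constant depends on $\Gamma$ through the injectivity and separation scale $\mu$, and not on $\kappa_{\max}$ alone: two distant arcs of the curve can come arbitrarily close. This is why the statement only claims $c_\Gamma=c_\Gamma(\Gamma)$. If a more quantitative constant were wanted, I would bound $\mu$ below by the reach of $\Gamma$, which is positive for a compact $C^2$ embedded curve. I will not need that here, because the compactness argument suffices for the stated lemma.
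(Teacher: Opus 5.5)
Your proof is correct and follows the same route as the paper: the trivial upper bound, then a split into a local regime at a curvature-dependent threshold and a global regime handled by compactness and simplicity. The differences are cosmetic. Locally, you project the chord onto the tangent via the angle function, which gives $\cos 1$ at threshold $1/\kappa_{\max}$; the paper uses a Taylor expansion with integral remainder, which gives $3/4$ at threshold $(2\kappa_{\max})^{-1}$. Globally, you minimise the chord length and use $d_\Gamma \le L/2$, whereas the paper minimises the ratio $|\gamma(s)-\gamma(t)|/d_\Gamma(s,t)$ directly.
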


\begin{proof}
The upper bound is immediate: the chord never exceeds the arc. For the
lower bound, consider first the case $d_\Gamma(s,t) \leq
\delta := (2\kappa_{\max})^{-1}$. By periodicity of $\gamma$ we may
relabel so that $|t-s| = d_\Gamma(s,t)$, i.e.\ the shorter arc joining
$\gamma(s)$ and $\gamma(t)$ is realised by the direct (non-wrapping)
parameter interval. Taylor expansion with integral remainder gives
\[
  \gamma(t) = \gamma(s) + \gamma'(s)(t-s)
  + \int_s^t (t-\tau)\gamma''(\tau)\,d\tau.
\]
Since $|\gamma'(s)|=1$ and $|\gamma''(\tau)| = |\kappa(\tau)| \leq
\kappa_{\max}$, the reverse triangle inequality gives
$|\gamma(s)-\gamma(t)| \geq |t-s| -
\tfrac{1}{2}\kappa_{\max}|t-s|^2$. Since $|t-s|\leq\delta =
(2\kappa_{\max})^{-1}$, the second term is at most $|t-s|/4$, giving
\begin{equation}\label{eq:chord_local}
  |\gamma(s)-\gamma(t)| \geq \tfrac{3}{4}\,d_\Gamma(s,t).
\end{equation}
For the case $d_\Gamma(s,t) > \delta$, simplicity of $\Gamma$ ensures
that $\gamma(s) = \gamma(t)$ only when $d_\Gamma(s,t)=0$, so the
continuous function
$\Phi(s,t) := |\gamma(s)-\gamma(t)|/d_\Gamma(s,t)$ is well-defined
and strictly positive on the compact set
$\{(s,t)\in[0,L]^2 : d_\Gamma(s,t) \geq \delta\}$ and therefore attains
a positive minimum $c_1 > 0$. Taking $c_\Gamma := \min(3/4, c_1)$
completes the proof.
\end{proof}

\begin{lemma}[\textbf{Polynomial seminorm equivalence on $\mathbb{T}_L$}]
\label{lem:poly_equiv}
For every $S \in V^r(\mathbb{T}_L)$,
\begin{equation}\label{eq:piecewise-equiv}
  |S|_{\dot{H}^{1/2}(\mathbb{T}_L)}
  \asymp |S|^*_{\dot{H}^{1/2}(\mathbb{T}_L)},
\end{equation}
with constants depending only on $r$ and $L$.
\end{lemma}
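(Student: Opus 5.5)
Throughout, the continuous seminorm is $|S|^2_{\dot H^{1/2}(\mathbb{T}_L)}=\int\!\int |S(s)-S(t)|^2 d_\Gamma(s,t)^{-2}\,ds\,dt$. The discrete one is $|S|^{*2}_{\dot H^{1/2}(\mathbb{T}_L)}=m^{-2}\sum_{i\ne j}(S(s_i)-S(s_j))^2 d_\Gamma(s_i,s_j)^{-2}$ on the equispaced samples $s_j=jL/m$. The plan is to reduce both quantities to a common \emph{block form}, and then show that each seminorm is comparable to it.

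\textbf{Step 1: localisation on the coarse cells.} Both double sums/integrals split into near-diagonal pairs (same or adjacent cells $\tilde I_k,\tilde I_l$) and far pairs with $d_\Gamma\ge \tilde h$ separated by $n\ge1$ cells.

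For near pairs, pull back via $\phi_k$ (two adjacent cells pulled back to $[0,2]$ jointly). On the finite-dimensional space $\mathbb{P}_r$ of (piecewise, continuous across the shared node) polynomials on $[0,2]$ modulo constants, both the continuous seminorm on $[0,2]^2$ and the discrete seminorm on the $2r+1$ nodes are genuine norms. The discrete one is a norm because a polynomial vanishing differences at $r+1$ nodes per cell is constant. They are therefore equivalent with constants depending only on $r$. Scale invariance of the $1/|x-y|^2$ kernel in one dimension makes both scale-free under $\phi_k$: $ds\,dt\sim\tilde h^2$ and $d^{-2}\sim\tilde h^{-2}$, and in the discrete case $m^{-2}d^{-2}\sim r^{-2}\tilde h^{-2}\cdot\tilde h^2$. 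Hence the near contributions are equivalent cellwise, with constants depending only on $r$.

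\textbf{Step 2: far pairs.} For cells at distance $n\tilde h$ with $n\ge1$, the kernel is comparable to $(n\tilde h)^{-2}$ uniformly on both discrete and continuous pairs. Write $S(s)-S(t)=(S(s)-\bar S_k)+(\bar S_k-\bar S_l)+(\bar S_l-S(t))$ with cell means $\bar S_k$. Both far contributions are then bounded above by
\[
C\sum_k \mathrm{osc}_k^2\sum_{n\ge1}n^{-2} + C\sum_{k\neq l}\frac{(\bar S_k-\bar S_l)^2}{d(k,l)^2}.
\]
Here $\mathrm{osc}_k^2$ denotes the normalised oscillation of $S$ on $\tilde I_k$, which is controlled by the near-diagonal seminorm of Step 1 (again by norm equivalence on $\mathbb{P}_r$). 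The same decomposition gives a lower bound with the reverse triangle inequality, up to the oscillation terms. By finite-dimensional equivalence, $\bar S_k$ may be replaced by the discrete cell mean without changing the estimate beyond a constant times the oscillation.

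\textbf{Step 3: assembly.} Both seminorms are thus equivalent to
\[
\sum_k \mathrm{osc}_k^2+\sum_{k\ne l}\frac{(\bar S_k-\bar S_l)^2}{d(k,l)^2},
\]
and the factor $m^{-2}$ versus $ds\,dt$ matches via $\tilde h=rL/m$. The periodic wrap only changes $d(k,l)$ to the periodic cell distance, which is harmless. Chaining the two equivalences gives \eqref{eq:piecewise-equiv} with constants depending on $r$ (and on $L$ only through normalisation).

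\textbf{Main obstacle.} The hard step is the lower bound in Step 2, where far-pair differences of means must be controlled \emph{from below} by the actual far interactions rather than absorbed. I would handle this by keeping the mean-difference term explicitly on both sides, so that no genuine lower estimate of one by the other is ever needed. The only nontrivial lower bound left is then the near-diagonal finite-dimensional one from Step 1, which carries the whole difficulty.
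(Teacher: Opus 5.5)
Your argument is correct, but it is organised differently from the paper's. The paper proves the termwise statement $I_{k,l}(S)\asymp m^{-2}J_{k,l}(S)$ for every cell pair, treating three types of pair separately:
\begin{itemize}
\item touching pairs by affine pullback (Case~1 of \cite[Lemma~6.2]{bonito2025});
\item pairs with $|k-l|\ge 2$ by the bounded kernel (Case~2);
\item the wraparound pair $(\tilde I_{m_0},\tilde I_1)$ by an explicit affine map $\psi$ onto $[0,2\tilde h]$.
\end{itemize}
Your near-field step is the same as Case~1. Your remark that the wrap is harmless is cleaner than the paper's $\psi$, since the whole configuration is invariant under rotations of $\mathbb{T}_L$.

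The real difference is the far field. Instead of proving pairwise equivalence, you route both seminorms through the global block functional $\sum_k\mathrm{osc}_k^2+\sum_{k\ne l}(\bar S_k-\bar S_l)^2/d(k,l)^2$. You get the lower bound from the reverse triangle inequality plus absorption: $D\ge D_{\mathrm{near}}\gtrsim\sum_k\mathrm{osc}_k^2$ and $D_{\mathrm{far}}\gtrsim M-C\sum_k\mathrm{osc}_k^2$ together give $M+\sum_k\mathrm{osc}_k^2\lesssim D$. This does close.

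Your route buys a self-contained proof that does not lean on the external lemma. It also gives an explicit description of the seminorm on $V^r(\mathbb{T}_L)$ as local oscillations plus a discrete $H^{1/2}$ seminorm of cell means.

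The paper's route buys brevity, because the ``main obstacle'' you anticipate never appears termwise. For $|k-l|\ge 2$ the kernel is $\asymp(n\tilde h)^{-2}$ on both sides, uniformly in $n$. After pullback, $\int_0^1\!\int_0^1|p(\xi)-q(\eta)|^2\,d\xi\,d\eta$ and $\sum_{i,j}|p(\xi_i)-q(\eta_j)|^2$ are two seminorms on $\mathbb{P}_r\times\mathbb{P}_r$ with the same null space $\{p=q=\mathrm{const}\}$. They are therefore equivalent with constants depending only on $r$. In fact the continuous one equals $\|p-\bar p\|^2_{L^2(0,1)}+\|q-\bar q\|^2_{L^2(0,1)}+(\bar p-\bar q)^2$ exactly, so your block form is just the sum of the per-pair quantities.

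One small slip: the discrete prefactor after pullback is $m^{-2}\tilde h^{-2}=(rL)^{-2}$, not $r^{-2}$. This is where the $L$-dependence of the constants enters.
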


\begin{proof} We decompose
\begin{equation*}
  |S|^2_{\dot{H}^{1/2}(\mathbb{T}_L)}
  = \sum_{k,l=1}^{m_0} I_{k,l}(S),
  \qquad
  |S|^{*2}_{\dot{H}^{1/2}(\mathbb{T}_L)}
  = \frac{1}{m^2} \sum_{k,l=1}^{m_0} J_{k,l}(S),
\end{equation*}
with
\begin{equation*}
  I_{k,l}(S) := \int_{\tilde{I}_k}\!\int_{\tilde{I}_l}
    \frac{|S(s)-S(t)|^2}{d_\Gamma(s,t)^2}\,ds\,dt,
  \qquad
  J_{k,l}(S) := \sum_{\substack{s_i \in \tilde{I}_k,\, s_j \in \tilde{I}_l \\ i \ne j}}
    \frac{|S(s_i)-S(s_j)|^2}{d_\Gamma(s_i,s_j)^2}.
\end{equation*}
It suffices to
prove $I_{k,l}(S) \asymp m^{-2} J_{k,l}(S)$ for each cell-pair. We distinguish three types.

For pairs with $\tilde{I}_k \cap \tilde{I}_l \ne \emptyset$, other
than the wraparound pair $(\tilde{I}_{m_0}, \tilde{I}_1)$, the affine pullback $\phi_k(\xi) = \tilde{s}_{k-1} + \tilde{h}\xi$ reduces the
configuration to a polynomial of degree $r$ on $[0,1]$, sampled at
$r+1$ points, with kernel $|\xi - \eta|^{-2}$. The equivalence
follows from Case~1 of~\cite[Lemma~6.2]{bonito2025}. For pairs with $|k - l| \ge 2$ in the periodic sense, we have
$d_\Gamma(s, t) \ge \tilde{h}$, the kernel is bounded, and Case~2
of~\cite[Lemma~6.2]{bonito2025} applies.

The pair $(\tilde{I}_{m_0}, \tilde{I}_1)$ shares the identified
vertex $\tilde{s}_{m_0} = \tilde{s}_0$ and is not covered
by~\cite{bonito2025}. For $s \in \tilde{I}_{m_0}$ and
$t \in \tilde{I}_1$, the periodic distance is $d_\Gamma(s,t) = L - s + t$,
which equals $|\psi(s) - \psi(t)|$ for the affine map
\begin{equation*}
  \psi(\sigma) = \begin{cases}
    \sigma - \tilde{s}_{m_0-1} & \sigma \in \tilde{I}_{m_0}, \\
    \tilde{h} + \sigma & \sigma \in \tilde{I}_1,
  \end{cases}
\end{equation*}
sending $\tilde{I}_{m_0} \cup \tilde{I}_1$ onto $[0, 2\tilde{h}]$. The push-forward
$\hat{S}(\xi) := S(\psi^{-1}(\tilde{h}\xi))$ is a continuous
piecewise polynomial of degree $r$ on $[0,1] \cup [1,2]$,
continuity at $\xi = 1$ following from $S(\tilde{s}_{m_0}) =
S(\tilde{s}_0)$, with $r+1$ samples per sub-interval and kernel
$|\xi - \eta|^{-2}$. The pulled-back configuration falls under
Case~1 of~\cite[Lemma~6.2]{bonito2025}.
Summing over cell-pairs, the result \eqref{eq:piecewise-equiv} follows.
\end{proof}

With the \textit{chord-arc} equivalence and seminorm equivalence on $\mathbb{T}_L$ in hand, we establish the norm equivalence on a smooth curve in the following theorem.

\begin{theorem}[\textbf{Discrete $H^{1/2}$ norm equivalence on a smooth curve}]
\label{thm:h12_curve}
Let $\Omega \subset \mathbb{R}^2$ be a bounded domain with
$\partial\Omega$ a simple closed $C^2$ curve of length $L$ and
maximum curvature $\kappa_{\max}$. Let
$Z = \{z_1, \ldots, z_m\} \subset \partial\Omega$ be $m$ points
equally spaced in arc length on $\partial\Omega$. Let
$G = \mathrm{Tr}\, U(B^{\bar{s}}_\infty(L^2(\Omega)))$ with
$\bar{s} > 1$ be the trace model class, and set $\beta = \bar{s}-1$.
Then for every $g \in G$ and $m \ge 1$,
\begin{align}
  \|g\|_{H^{1/2}(\partial\Omega)}
  &\lesssim \|g\|^*_{H^{1/2}(\partial\Omega)}
   + \|g\|_{\mathrm{Tr}(B)}\, m^{-\beta},
  \label{eq:thm2-upper} \\
  \|g\|^*_{H^{1/2}(\partial\Omega)}
  &\lesssim \|g\|_{H^{1/2}(\partial\Omega)}
   + \|g\|_{\mathrm{Tr}(B)}\, m^{-\beta},
  \label{eq:thm2-lower}
\end{align}
where the constants depend on $\Gamma$, $\bar{s}$, and $r$.
\end{theorem}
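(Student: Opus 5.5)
Pull everything back to the periodic interval via the arc-length parametrisation $\gamma\colon\mathbb{T}_L\to\bOmg$ and set $G:=g\circ\gamma$. Since $|\gamma'|=1$, the surface measure becomes $ds$, so $\norm{g}_{L^2(\bOmg)}=\norm{G}_{L^2(\mathbb{T}_L)}$. The equally spaced points $z_j=\gamma(s_j)$ with $s_j=jL/m$ are exactly the samples of \S\ref{sec:polyapprox}, so the discrete $L^2$ parts also agree. For the seminorms, Lemma~\ref{lem:chord_arc} with $d=2$ gives $d_\Gamma(s,t)^2\le|\gamma(s)-\gamma(t)|^2\le c_\Gamma^{-2}d_\Gamma(s,t)^2$ for both the continuous kernel and the discrete kernel. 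Hence
\[
c_\Gamma^2\,|G|_{\dot H^{1/2}(\mathbb{T}_L)}\le|g|_{H^{1/2}(\bOmg)}\le|G|_{\dot H^{1/2}(\mathbb{T}_L)},
\]
and the same holds for the starred versions. This reduces both inequalities of the theorem to the corresponding statements for $G$ on $\mathbb{T}_L$ with the arc-length kernel, with the constants multiplied by powers of $c_\Gamma$.

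On $\mathbb{T}_L$ I introduce the quasi-interpolant $S=S_kG\in V^r(\mathbb{T}_L)$. The key property is that $S(s_j)=G(s_j)$ for every $j$, so $\norm{G}^*=\norm{S}^*$ exactly; the discrete norm only sees the samples. For the upper bound \eqref{eq:thm2-upper} I write
\[
\norm{G}_{H^{1/2}}\le\norm{S}_{H^{1/2}}+\norm{G-S}_{H^{1/2}},
\]
and then:
\begin{itemize}
\item bound the second term by $C\norm{g}_{\mathrm{Tr}(B)}m^{-\beta}$ using \eqref{eq:quasi_interp};
\item bound $|S|_{\dot H^{1/2}}\lesssim|S|^*$ by Lemma~\ref{lem:poly_equiv};
\item control the $L^2$ part of $S$ by its samples, using cellwise Lagrange-basis equivalence on $[0,1]$ (the pullback $\phi_k$) and summing: $\norm{S}_{L^2(\mathbb{T}_L)}^2\asymp\tilde h\sum_j|S(s_j)|^2\asymp\frac{L}{m}\sum_j|G(s_j)|^2$.
\end{itemize}
Combining these gives \eqref{eq:thm2-upper}. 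The lower bound \eqref{eq:thm2-lower} runs the same chain in reverse:
\[
\norm{G}^*=\norm{S}^*\lesssim\norm{S}_{H^{1/2}}\le\norm{G}_{H^{1/2}}+\norm{G-S}_{H^{1/2}}\lesssim\norm{G}_{H^{1/2}}+\norm{g}_{\mathrm{Tr}(B)}m^{-\beta}.
\]

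Some bookkeeping is needed to line up with Lemma~\ref{lem:poly_equiv}. That lemma is stated with the discrete seminorm normalised by $m^{-2}$ over sample pairs, matching \eqref{eq:h12_semi}. I will check that $|G|^*_{\dot H^{1/2}(\mathbb{T}_L)}$ is built with the same normalisation, so that the chord-arc comparison applies term by term. I will also note that the model class is defined through traces of $B^{\bar s}_\infty(L^2(\Omg))$ on the curved $\Omg$. The estimates in \eqref{eq:quasi_interp} are stated for $G$ in terms of $\norm{G}_{\mathrm{Tr}(B)}$, and I take them as given, identifying $\norm{G}_{\mathrm{Tr}(B)}$ with $\norm{g}_{\mathrm{Tr}(B)}$ through the parametrisation. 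That identification needs $\gamma$ to be $C^2$ with unit speed, so that it preserves the smoothness scale up to $\bar s$-dependent constants. The constants will depend only on $c_\Gamma$ (hence on $\Gamma$), $L$, $r$, and $\bar s$.

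I expect the main obstacle to be the transfer of \eqref{eq:quasi_interp} to the curved setting. That estimate is borrowed from the flat-face theory, and for $\bar s>2$ a $C^2$ parametrisation may not preserve $B^{\bar s}$-type regularity of the trace. My first attempt is to argue locally: use a Stein extension and the chart $\gamma$, and restrict to $1<\bar s\le 2$, or else assume the needed smoothness of $\gamma$ implicitly through $\Gamma$-dependent constants. A secondary, milder point is the wraparound cell pair, which Lemma~\ref{lem:poly_equiv} already handles.
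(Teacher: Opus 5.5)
Your proof is correct, and it takes a different route from the paper's in how the pieces are combined. The paper uses the quasi-interpolant only for the seminorm. From $|G|^*=|S_kG|^*\asymp|S_kG|$ and the $H^{1/2}$ approximation bound it asserts a \emph{difference} estimate $\bigl||G|^*-|G|\bigr|\lesssim\|G\|_{\mathrm{Tr}(B)}m^{-\beta}$, namely \eqref{eq:semi_norm_equiv}. It treats the $L^2$ part separately as a trapezoidal-rule quadrature error at rate $m^{-(\bar s-1/2)}$, namely \eqref{eq:L2_disc}. It then merges the two via $|a-b|\le|a^2-b^2|/(a+b)$ and transfers back with the chord-arc lemma.

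You instead route the full $H^{1/2}$ norm through $S=S_kG$. You add the cellwise Lagrange-basis equivalence $\|S\|^2_{L^2(\mathbb{T}_L)}\asymp\frac{L}{m}\sum_j|S(s_j)|^2$ to Lemma~\ref{lem:poly_equiv}, and then write two one-sided chains. This needs only finite-dimensional norm equivalences on $V^r(\mathbb{T}_L)$ plus the single estimate \eqref{eq:quasi_interp}, with no quadrature analysis of $|G|^2$. It also delivers exactly the two one-sided inequalities the theorem asserts.

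It is in fact the more careful route. An equivalence $\asymp$ with generic constants, combined with a triangle inequality, yields only $|G|^*\lesssim|G|+Cm^{-\beta}$ and $|G|\lesssim|G|^*+Cm^{-\beta}$. It does not yield the difference bound the paper writes. So the paper's intermediate claims are stronger than its chain justifies, even though its final statement coincides with what you prove. Your worry about transferring \eqref{eq:quasi_interp} to the curve, in particular for $\bar s>2$ with only a $C^2$ parametrisation, is legitimate. However, the paper simply assumes that estimate on $\mathbb{T}_L$ in terms of the pulled-back trace norm, so you are on the same footing there.

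Two small slips, neither affecting the argument:
\begin{itemize}
\item Lemma~\ref{lem:chord_arc} gives $c_\Gamma^2 d_\Gamma^2\le|\gamma(s)-\gamma(t)|^2\le d_\Gamma^2$, since the chord is at most the arc, not at least. The correct transfer is therefore $|G|_{\dot H^{1/2}(\mathbb{T}_L)}\le|g|_{H^{1/2}(\bOmg)}\le c_\Gamma^{-1}|G|_{\dot H^{1/2}(\mathbb{T}_L)}$, not the reversed inequality you display.
\item The discrete $L^2$ parts do not literally agree. Definition~\ref{def:h12} uses $\frac{1}{m}\sum_j$, whereas your periodic version uses $\frac{L}{m}\sum_j$.
\end{itemize}
Both slips change only the constants, by factors of $c_\Gamma$ and $L$, which you already allow for.
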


\begin{proof}
\textbf{Step 1: Arc-length parametrisation.}
Let $\gamma\colon[0,L] \to \partial\Omega$ denote the arc-length
parametrisation, so that $|\gamma'(s)| = 1$ for all $s \in [0,L]$
and $\gamma(0) = \gamma(L)$. The closedness of $\partial\Omega$
permits the identification of $[0,L]$ with $\mathbb{T}_L$. The
boundary collocation points are
\begin{equation*}
  z_j = \gamma(s_j),\qquad s_j = jL/m,\qquad j = 1,\ldots,m,
\end{equation*}
where $s_m = L$ corresponds to $s_0 = 0$ in $\mathbb{T}_L$. For
$g \in C(\partial\Omega)$, define the pullback
$G := g \circ \gamma \in C(\mathbb{T}_L)$. The arc-length
parametrisation gives $d\sigma = ds$, so that
\begin{equation*}
  \|g\|^2_{L^2(\partial\Omega)} = \int_0^L |G(s)|^2\,ds,
  \qquad
  |g|^2_{H^{1/2}(\partial\Omega)}
  = \int_0^L\!\int_0^L
    \frac{|G(s)-G(t)|^2}{|\gamma(s)-\gamma(t)|^2}\,ds\,dt.
\end{equation*}

\medskip\noindent
\textbf{Step 2: Reduction to the periodic interval.}
By Lemma~\ref{lem:chord_arc},
$|\gamma(s) - \gamma(t)|^2 \asymp d_\Gamma(s,t)^2$, we have
\begin{equation}\label{eq:transfer}
  |g|^2_{H^{1/2}(\partial\Omega)}
    \asymp |G|^2_{\dot{H}^{1/2}(\mathbb{T}_L)},
  \qquad
  |g|^{*2}_{H^{1/2}(\partial\Omega)}
    \asymp |G|^{*2}_{\dot{H}^{1/2}(\mathbb{T}_L)}.
\end{equation}
The continuous and discrete seminorms on $\partial\Omega$ are
therefore equivalent to their periodic-interval counterparts on
$\mathbb{T}_L$, with constants depending on $c_\Gamma$.

\medskip\noindent
\textbf{Step 3: Extension to general $G \in C(\mathbb{T}_L)$.}
The quasi-interpolant $S_k$ satisfies the interpolation property
$G(s_j) = (S_k G)(s_j)$ for $j = 1, \ldots, m$. Since the discrete
seminorm $|\cdot|^*_{\dot{H}^{1/2}(\mathbb{T}_L)}$ depends only on
the values at sample points,
\begin{equation}\label{eq:interp_identity}
  |G|^*_{\dot{H}^{1/2}(\mathbb{T}_L)}
  = |S_k G|^*_{\dot{H}^{1/2}(\mathbb{T}_L)}
  \qquad \forall\, G \in C(\mathbb{T}_L).
\end{equation}
Applying Lemma~\ref{lem:poly_equiv} to
$S_k G \in V^r(\mathbb{T}_L)$,
\begin{equation}\label{eq:poly_equiv_applied}
  |S_k G|^*_{\dot{H}^{1/2}(\mathbb{T}_L)}
  \asymp |S_k G|_{\dot{H}^{1/2}(\mathbb{T}_L)}.
\end{equation}
The continuous $H^{1/2}$ quasi-interpolation
estimate~\eqref{eq:quasi_interp} together with the triangle
inequality gives
\begin{equation}\label{eq:triangle_continuous}
  \bigl|\, |S_k G|_{\dot{H}^{1/2}(\mathbb{T}_L)}
        - |G|_{\dot{H}^{1/2}(\mathbb{T}_L)} \,\bigr|
  \le C\, \|G\|_{\mathrm{Tr}(B)}\, m^{-\beta}.
\end{equation}
Chaining~\eqref{eq:interp_identity}--\eqref{eq:triangle_continuous}
gives
\begin{equation}\label{eq:semi_norm_equiv}
  \bigl|\, |G|^*_{\dot{H}^{1/2}(\mathbb{T}_L)}
        - |G|_{\dot{H}^{1/2}(\mathbb{T}_L)} \,\bigr|
  \lesssim \|G\|_{\mathrm{Tr}(B)}\, m^{-\beta},
\end{equation}
with constants depending only on $r$ and $L$.

\medskip\noindent
\textbf{Step 4: Composing the equivalences.}
It remains to combine the seminorm bound~\eqref{eq:semi_norm_equiv}
with an analogous $L^2$ bound on $\mathbb{T}_L$, then transfer the
combined estimate back to $\partial\Omega$ via the \textit{chord-arc}
equivalence~\eqref{eq:transfer}.

\smallskip\noindent
\underline{$L^2$ bound on $\mathbb{T}_L$:} The discrete $L^2$ norm
$\|G\|^{*2}_{L^2(\mathbb{T}_L)} := \frac{L}{m}\sum_{j=1}^{m}|G(s_j)|^2$
is the trapezoidal-rule approximation of
$\|G\|^2_{L^2(\mathbb{T}_L)} = \int_0^L |G(s)|^2\,ds$ on the uniform
grid $\{s_j\}_{j=1}^m$. Standard 1D quadrature theory on
quasi-uniform partitions
\cite[Theorem~4.4.4]{BrennerScott2008} gives, for any
$G \in \mathrm{Tr}(B)$ with $\bar{s} > 1/2$,
\begin{equation}\label{eq:L2_disc}
  \bigl|\, \|G\|^*_{L^2(\mathbb{T}_L)} - \|G\|_{L^2(\mathbb{T}_L)} \,\bigr|
  \le C\,\|G\|_{\mathrm{Tr}(B)}\,m^{-(\bar{s} - 1/2)}
  \le C\,\|G\|_{\mathrm{Tr}(B)}\,m^{-\beta},
\end{equation}
the second inequality using $\bar{s} - 1/2 \ge \bar{s} - 1 = \beta$
for $\bar{s} \ge 1/2$, which holds under our hypothesis $\bar{s} > 1$.

\smallskip\noindent
\underline{Combining seminorm and $L^2$:}
Set $R := C\,\|G\|_{\mathrm{Tr}(B)}\,m^{-\beta}$. Estimates
\eqref{eq:semi_norm_equiv} and~\eqref{eq:L2_disc} give
\begin{equation*}
  \bigl|\, |G|^*_{\dot{H}^{1/2}(\mathbb{T}_L)}
        - |G|_{\dot{H}^{1/2}(\mathbb{T}_L)} \,\bigr| \le R,
  \qquad
  \bigl|\, \|G\|^*_{L^2(\mathbb{T}_L)}
        - \|G\|_{L^2(\mathbb{T}_L)} \,\bigr| \le R.
\end{equation*}
Applying the elementary inequality $|a - b| \le |a^2 - b^2|/(a + b)$
for $a, b \ge 0$ with $a + b > 0$, together with $(p + q)^2 \le 2(p^2 + q^2)$,
gives
\begin{equation*}
  \bigl|\, \|G\|^*_{H^{1/2}(\mathbb{T}_L)}
        - \|G\|_{H^{1/2}(\mathbb{T}_L)} \,\bigr|
  \le \sqrt{2}\,R
  = \sqrt{2}\,C\,\|G\|_{\mathrm{Tr}(B)}\,m^{-\beta}.
\end{equation*}

\smallskip\noindent
% \underline{Transfer back to $\partial\Omega$:}
% By the chord-arc equivalence~\eqref{eq:transfer},
% $\|G\|_{H^{1/2}(\mathbb{T}_L)} \asymp \|g\|_{H^{1/2}(\partial\Omega)}$
% and $\|G\|^*_{H^{1/2}(\mathbb{T}_L)} \asymp \|g\|^*_{H^{1/2}(\partial\Omega)}$,
% both with constants absorbed into $c_\Gamma$. Substituting gives
% \eqref{eq:thm2-upper}--\eqref{eq:thm2-lower}, the constants depending
% on $\Gamma$, $\bar{s}$, and $r$.
\underline{Transfer back to $\partial\Omega$:}
The continuous equivalence
$\|G\|_{H^{1/2}(\mathbb{T}_L)} \asymp \|g\|_{H^{1/2}(\partial\Omega)}$
follows from the chord-arc equivalence~\eqref{eq:transfer} with
constants depending only on $c_\Gamma$. For the discrete norms, the
trapezoidal convention $\|G\|^{*2}_{L^2(\mathbb{T}_L)}
= (L/m)\sum_{j=1}^m |G(s_j)|^2$ in~\eqref{eq:L2_disc} differs from
Definition~\ref{def:h12} by the Riemann factor $L$, so
$\|G\|^{*2}_{H^{1/2}(\mathbb{T}_L)}
= L\,\|g\|^{*2}_{L^2(\partial\Omega)}
+ |g|^{*2}_{H^{1/2}(\partial\Omega)}$ up to the chord-arc factor on
the seminorm. This gives $\|G\|^*_{H^{1/2}(\mathbb{T}_L)} \asymp
\|g\|^*_{H^{1/2}(\partial\Omega)}$ with constants depending on
$c_\Gamma$ and $L$. Substituting
gives~\eqref{eq:thm2-upper}--\eqref{eq:thm2-lower}, the constants
depending on $\Gamma$, $\bar{s}$, and $r$.
\end{proof}

\begin{remark}\label{rem:quasiuniform}
Theorem~\ref{thm:h12_curve} assumes equally-spaced boundary points
in arc length. The experiments use parametric sampling (uniform in
angle for the disk, uniform in $\theta$ for the flower), which
produces points that are equally spaced in arc length for the disk
and approximately so for the flower. A generalisation to
quasi-uniform point distributions, where the ratio of maximal to
minimal spacing is bounded, follows from the same argument with
modified constants.
\end{remark}
 
As a robustness check, we verify that the seminorm V-statistic
$|g|^{*2}_{H^{1/2}(\partial\Omega)}$ concentrates around its mean
$L^{-2}|g|^2_{H^{1/2}(\partial\Omega)}$ at the Monte Carlo rate even
under {\em i.i.d} uniform boundary sampling, where the quasi-uniform
argument of Theorem~\ref{thm:h12_curve} does not apply. The
following proposition is not invoked in Theorem~\ref{thm:apriori};
it is included to support the experimental observation that the
consistent loss remains effective under random sampling regimes
(Experiment~\ref{sec:exp6}).
 
\begin{proposition}[\textbf{Probabilistic $H^{1/2}$ seminorm
concentration under {\em i.i.d.}\ uniform boundary
sampling}]\label{prop:h12_rejection}
Adopt the geometric hypotheses of Theorem~\ref{thm:h12_curve}, and let
$\mathcal{G} = \mathrm{Tr}\,U(B^{\bar{s}}_\infty(L^2(\Omega)))$ with
$\bar{s} > 3/2$. Let $\{z_j\}_{j=1}^{m} \subset \partial\Omega$ be
drawn {\em i.i.d} uniformly with respect to arc-length on
$\partial\Omega$. Then for every $g \in \mathcal{G}$ and every
$\delta \in (0,1)$, with probability at least $1-\delta$,
\begin{equation}\label{eq:prop5}
  \bigl|\,L^2\,|g|^{*2}_{H^{1/2}(\partial\Omega)}
        - |g|^2_{H^{1/2}(\partial\Omega)}\,\bigr|
  \;\le\; C_\Gamma\,\|g\|^2_{\mathrm{Tr}(B)}\,(m\delta)^{-1/2},
\end{equation}
where $C_\Gamma$ depends only on $L$, $\kappa_{\max}$, $\bar{s}$, and
the \textit{chord-arc} constant $c_\Gamma$ of
Lemma~\ref{lem:chord_arc}.
\end{proposition}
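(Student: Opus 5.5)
The plan is to view $L^2|g|^{*2}_{H^{1/2}(\partial\Omega)}$ as a normalised V-statistic and bound its deviation with Chebyshev's inequality. The tail factor $\delta^{-1/2}$ in \eqref{eq:prop5} is exactly what Chebyshev produces, so no exponential concentration is needed. Write $z_j=\gamma(S_j)$ with $S_j$ i.i.d.\ uniform on $\mathbb{T}_L$, set $G=g\circ\gamma$, and define the symmetric kernel
\[
  h(s,t) := \frac{|G(s)-G(t)|^2}{|\gamma(s)-\gamma(t)|^2}, \qquad s\neq t .
\]
Then
\[
  L^2|g|^{*2}_{H^{1/2}(\partial\Omega)} = \frac{L^2}{m^2}\sum_{i\neq j} h(S_i,S_j) = \frac{m-1}{m}\,L^2 U_m ,
\]
where $U_m$ is the U-statistic with kernel $h$. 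Since $\mathbb{E}\,h(S_1,S_2)=L^{-2}\iint h\,ds\,dt = L^{-2}|g|^2_{H^{1/2}(\partial\Omega)}$ by Step~1 of the proof of Theorem~\ref{thm:h12_curve}, we have $\mathbb{E}[L^2U_m]=|g|^2_{H^{1/2}(\partial\Omega)}$. The deviation therefore splits into a bias term and a fluctuation term:
\[
  \frac{1}{m}\,|g|^2_{H^{1/2}(\partial\Omega)} + \frac{m-1}{m}\,L^2\,\bigl|U_m-\mathbb{E}U_m\bigr| .
\]
The bias is bounded by $C\|g\|^2_{\mathrm{Tr}(B)}m^{-1}$ via the trace embedding, and $m^{-1}\le (m\delta)^{-1/2}$.

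For the fluctuation I would use the Hoeffding variance formula for U-statistics of order two:
\[
  \mathrm{Var}(U_m) = \frac{4(m-2)}{m(m-1)}\,\zeta_1 + \frac{2}{m(m-1)}\,\zeta_2,
\]
where $\zeta_1=\mathrm{Var}\bigl(\mathbb{E}[h(S_1,S_2)\mid S_1]\bigr)$ and $\zeta_2=\mathrm{Var}(h)$. Both are bounded by $\mathbb{E}h^2$, so $\mathrm{Var}(U_m)\le 6\,\mathbb{E}h^2/m$. Chebyshev's inequality then gives, with probability at least $1-\delta$,
\[
  |U_m-\mathbb{E}U_m| \le \bigl(6\,\mathbb{E}h^2/(m\delta)\bigr)^{1/2}.
\]
Combining this with the bias bound yields \eqref{eq:prop5} with $C_\Gamma$ proportional to $L\,(\mathbb{E}h^2)^{1/2}/\|g\|^2_{\mathrm{Tr}(B)}$.

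The main obstacle is showing $\mathbb{E}h^2 \lesssim \|g\|^4_{\mathrm{Tr}(B)}$. The kernel $h$ is singular on the diagonal, and $h^2$ is not integrable for a merely $H^{1/2}$ function, so this is where the hypothesis $\bar{s}>3/2$ enters. By Lemma~\ref{lem:chord_arc}, $h(s,t)\le c_\Gamma^{-2}|G(s)-G(t)|^2/d_\Gamma(s,t)^2$. If $G$ is Lipschitz on $\mathbb{T}_L$, this quotient is bounded by $c_\Gamma^{-2}|G|_{\mathrm{Lip}}^2$, so $h$ is bounded and $\mathbb{E}h^2\le c_\Gamma^{-4}|G|^4_{\mathrm{Lip}}$. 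It remains to justify $|G|_{\mathrm{Lip}}\lesssim\|g\|_{\mathrm{Tr}(B)}$. Taking a trace loses half a derivative, so $g\in B^{\bar s-1/2}_\infty(L^2(\partial\Omega))$. On the one-dimensional curve, the embedding $B^{\sigma}_{\infty}(L^2)\hookrightarrow C^{0,1}$ holds once $\sigma>1/2+1=3/2$ fails to be met, so I would instead use the Hölder embedding $B^{\bar s-1/2}_\infty(L^2)\hookrightarrow C^{0,\alpha}$ with $\alpha=\bar s-1>1/2$. This gives $h\lesssim d_\Gamma^{2\alpha-2}$, hence $h^2\lesssim d_\Gamma^{4\alpha-4}$, which is integrable in one dimension precisely when $4\alpha-4>-1$, i.e.\ $\alpha>3/4$, i.e.\ $\bar s>7/4$. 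Because this misses the stated threshold $\bar s>3/2$, I would try a sharper route first: a Besov/Gagliardo-type bound on $\iint |G(s)-G(t)|^4 d_\Gamma^{-4}$, i.e.\ membership of $G$ in $W^{3/4,4}$, obtained via Sobolev embedding from $H^{\bar s-1/2}$ with $\bar s-1/2>1$. This requires $\bar s-1/2-1/2\ge 3/4-1/4$, i.e.\ $\bar s\ge 3/2$, so it works under the stated hypothesis. The step I expect to cost the most care is tracking constants through this embedding and through the chord-arc factor $c_\Gamma^{-4}$, so that $C_\Gamma$ depends only on $L$, $\kappa_{\max}$, $\bar s$ and $c_\Gamma$.
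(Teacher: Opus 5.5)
Your proposal is correct. It shares the paper's probabilistic skeleton: the V-statistic, the Hoeffding variance formula, Chebyshev, and the $m^{-1}|g|^2_{H^{1/2}(\partial\Omega)}$ bias absorbed into $(m\delta)^{-1/2}$. Where it genuinely differs is in how the second moment of the kernel is controlled.

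The paper makes the kernel bounded. Lemma~\ref{lem:chord_arc} and the mean-value theorem give $K\le M_g=c_\Gamma^{-2}\|g\|^2_{C^1(\partial\Omega)}$, then $\mathrm{Var}(K)\le M_g\,\mathbb{E}K$, and the proof closes with an embedding $\mathrm{Tr}(B^{\bar s}_\infty)\hookrightarrow C^1(\partial\Omega)$ asserted for $\bar s>3/2$. You instead bound $\mathbb{E}h^2$ by $c_\Gamma^{-4}$ times the $W^{3/4,4}(\mathbb{T}_L)$ Gagliardo seminorm of $G$, and use the one-dimensional embedding $H^1\hookrightarrow W^{3/4,4}$.

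The paper's route is shorter and gives a variance bound linear in $|g|^2_{H^{1/2}(\partial\Omega)}$. However, your diagnosis of its weak point is right. The trace of the $L^2$-based class only lies in $B^{\bar s-1/2}_\infty(L^2(\partial\Omega))$. For $3/2<\bar s<2$ this space contains non-Lipschitz functions, for example a cut-off $|s|^{\bar s-1}$ singularity. So the bounded-kernel step really needs $\bar s>2$, while your fourth-moment route works under the stated hypothesis $\bar s>3/2$.

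Two refinements. First, the trace lies in $B^{\bar s-1/2}_\infty(L^2)$ rather than $H^{\bar s-1/2}$, so it is the strict inequality $\bar s>3/2$ that places $G$ in $H^{1+\epsilon}(\mathbb{T}_L)$ for some $\epsilon\in(0,1/2)$. Second, with that $\epsilon$ you can avoid the critical embedding entirely. Set $\Delta_hG:=G(\cdot+h)-G(\cdot)$. Combining the H\"older-$(1/2+\epsilon)$ continuity of $G$ with $\|\Delta_hG\|_{L^2}\le h\|G'\|_{L^2}$ gives $\|\Delta_hG\|^4_{L^4}\le\|\Delta_hG\|^2_{L^\infty}\|\Delta_hG\|^2_{L^2}\lesssim h^{1+2\epsilon}\cdot h^{2}\,\|G\|^4_{H^{1+\epsilon}}$. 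Hence $\int_0^{L/2}h^{-4}\|\Delta_hG\|^4_{L^4}\,dh<\infty$ by an elementary computation.
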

 
\begin{proof} 
Set $K(z,z') := |g(z)-g(z')|^2/|z-z'|^2$ (with $K(z,z):=0$) and the
V-statistic
$T := |g|^{*2}_{H^{1/2}(\partial\Omega)}
= m^{-2}\sum_{i\ne j}K(z_i,z_j)$.
 
\smallskip\noindent\textbf{Kernel bound.}
By Lemma~\ref{lem:chord_arc}, $|z-z'| \ge c_\Gamma\,d_\Gamma(s,t)$,
so the mean-value theorem along the arc-length parametrisation gives
$|g(z)-g(z')| \le \|g\|_{C^1(\partial\Omega)}\,d_\Gamma
\le c_\Gamma^{-1}\|g\|_{C^1}|z-z'|$. Therefore, we have
\begin{equation}\label{eq:K_bound_prop5}
  K(z,z') \;\le\; M_g := c_\Gamma^{-2}\,\|g\|^2_{C^1(\partial\Omega)}.
\end{equation}
 
\smallskip\noindent\textbf{Mean and variance.}
The $z_i$ being {\em i.i.d.}\ with arc-length density $1/L$,
$\mathbb{E}[K(z_1,z_2)] = L^{-2}|g|^2_{H^{1/2}(\partial\Omega)}$ and
$\mathbb{E}[T] = \tfrac{m-1}{m}\,L^{-2}|g|^2_{H^{1/2}(\partial\Omega)}$.
The Hoeffding decomposition of $T$ (an order-2 U-statistic with
diagonal excluded, normalised by $m^{-2}$) gives
\[
  \mathrm{Var}(T)
  = \frac{2(m-1)\,\mathrm{Var}(K) + 4(m-1)(m-2)\,\zeta_1}{m^3},
  \qquad
  \zeta_1 := \mathrm{Var}\!\bigl(\mathbb{E}[K(z,z')\mid z]\bigr).
\]
Jensen's inequality gives $\zeta_1 \le \mathrm{Var}(K)$, hence
$\mathrm{Var}(T) \le 4\,\mathrm{Var}(K)/m$ for $m \ge 2$
(cf.~\cite{serfling1980}, \S5.2.1). Combined with~\eqref{eq:K_bound_prop5}
and $\mathrm{Var}(K) \le \mathbb{E}[K^2] \le M_g\,\mathbb{E}[K]$:
\begin{equation}\label{eq:var_T_prop5}
  \mathrm{Var}(T)
  \;\le\;
  \frac{4\,M_g\,|g|^2_{H^{1/2}(\partial\Omega)}}{L^2\,m}.
\end{equation}
 
\smallskip\noindent\textbf{Chebyshev's inequality.}
With probability at least $1-\delta$,
\[
  |T - \mathbb{E}[T]|
  \;\le\; \sqrt{\mathrm{Var}(T)/\delta}
  \;\le\; \frac{2\,\sqrt{M_g}\,|g|_{H^{1/2}(\partial\Omega)}}
              {L\,\sqrt{m\delta}}.
\]
Adding the deterministic bias
$|\mathbb{E}[T] - L^{-2}|g|^2_{H^{1/2}}| = L^{-2}|g|^2_{H^{1/2}}/m$
and multiplying through by $L^2$:
\begin{equation}\label{eq:T_sq_almost_prop5}
  \bigl|\,L^2\,T - |g|^2_{H^{1/2}(\partial\Omega)}\,\bigr|
  \;\le\;
  \frac{2\,L\,\sqrt{M_g}\,|g|_{H^{1/2}(\partial\Omega)}}{\sqrt{m\delta}}
  \;+\;
  \frac{|g|^2_{H^{1/2}(\partial\Omega)}}{m}.
\end{equation}
 
\smallskip\noindent\textbf{Closing with the Besov embedding.}
From the kernel bound~\eqref{eq:K_bound_prop5} integrated over
$\partial\Omega \times \partial\Omega$, one can obatin that
$$|g|_{H^{1/2}(\partial\Omega)} \le L\,c_\Gamma^{-1}\,\|g\|_{C^1(\partial\Omega)}.$$
By using above inequality, the bound \eqref{eq:T_sq_almost_prop5} is deduced by
\begin{equation}\label{eq:T_sq_almost_prop05}
  \bigl|\,L^2\,T - |g|^2_{H^{1/2}(\partial\Omega)}\,\bigr|
  \;\le\; \dfrac{ \|g\|^2_{C^1(\partial\Omega)}}{\sqrt{m\delta}}.
\end{equation}
% Therefore, both the  terms on the right of~\eqref{eq:T_sq_almost_prop5} are
% bounded by a constant multiple of
% $\|g\|^2_{C^1(\partial\Omega)}/\sqrt{m\delta}$ for $\delta \le 1$
% (the $m^{-1}$ bias is absorbed into the leading $(m\delta)^{-1/2}$).
Finally, 
by using Besov embedding 
\[
\mathrm{Tr}(B^{\bar{s}}_\infty) \hookrightarrow C^1(\partial\Omega), \quad \bar{s} > 3/2,
\]
we have 
% $\mathrm{Tr}(B^{\bar{s}}_\infty) \hookrightarrow C^1(\partial\Omega)$
% for $\bar{s} > 3/2$ gives
$\|g\|_{C^1(\partial\Omega)} \le C(\Omega,\bar{s})\,\|g\|_{\mathrm{Tr}(B)}$.
% yielding~\eqref{eq:prop5} with
% $C_\Gamma := C_\Gamma(L,c_\Gamma,\kappa_{\max},\bar{s})$.
% This completes the proof.
Using this bound, we obtain \eqref{eq:prop5} with   $C_\Gamma := C_\Gamma(L,c_\Gamma,\kappa_{\max},\bar{s})$.
This completes the proof.
\end{proof}

%\newpage
\subsection{The \emph{a priori} error bound}\label{sec:apriori}
With the discrete norm equivalence established, we can now state the
main \emph{a priori} result: the consistent loss $L^*_{\mathrm{sq},2}$
controls the $H^1$ error on the cut domain, up to the optimal
recovery rate. Before stating the theorem, we provide auxiliary
discretisation results on the cut domain interior that justify
hypothesis~\eqref{eq:thm_L2_disc} below.

\begin{figure}[ht]
  \centering
  \includegraphics[width=0.7\textwidth]{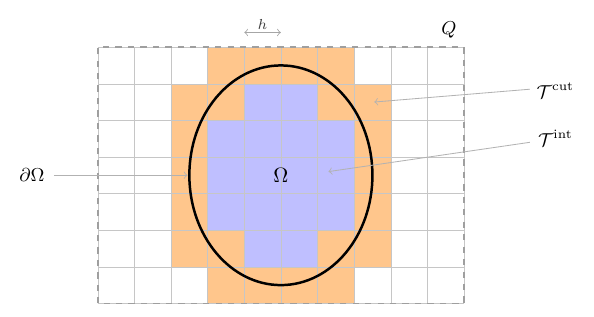}
  \caption{Cut-cell decomposition of $\Omega \subset Q$: interior cells
  $\mathcal{T}^{\mathrm{int}}$ (teal) and cut cells $\mathcal{T}^{\mathrm{cut}}$
  (red, crossing $\partial\Omega$).}
  \label{fig:cut_cell}
\end{figure}

\begin{proposition}[\textbf{$L^2$ discretisation on cut domains}]
\label{prop:l2_cut}
Let $\Omega\subset\mathbb{R}^2$ be a bounded domain with
$\partial\Omega$ a $\C^2$ level-set curve of length~$L$; fix
$1\le p\le\infty$, $s>2/p$, $r>\max(s,1)$, and write
$B=B^s_p(\Omega)$. Let $Q\supset\Omega$ be an axis-aligned bounding
box and $G_{k,r}\subset Q$ the tensor-product grid of mesh size
$h=(\mathrm{diam}\,Q)\cdot 2^{-k}/(r-1)$. Set
$\tilde{X}:=G_{k,r}\cap\Omega$, $\tilde{m}:=|\tilde{X}|$, and
$\|w\|^{*2}_{L^2(\Omega)}:=(|\Omega|/\tilde{m})\sum_{x_i\in\tilde{X}}|w(x_i)|^2$.
Then for every $w\in B$,
\begin{equation}\label{eq:prop3}
  \bigl|\,\|w\|_{L^2(\Omega)}-\|w\|^*_{L^2(\Omega)}\bigr|
  \;\le\; C_{\mathrm{int}}\,\|w\|_B\,\tilde{m}^{-\alpha_\Omega},
  \quad
  \alpha_\Omega
  := \min\!\bigl(\tfrac{s}{2}
    -(\tfrac{1}{p}-\tfrac{1}{2})_+,\;\tfrac{1}{4}\bigr),
\end{equation}
with $C_{\mathrm{int}}$ depending on $\Omega$, $s$, $p$, $r$, and the
Stein extension constant $C_E$.
\end{proposition}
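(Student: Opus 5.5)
I would split the grid cells of $G_{k,r}$ into the interior cells $\mathcal{T}^{\mathrm{int}}$, contained in $\Omega$, and the cut cells $\mathcal{T}^{\mathrm{cut}}$, which meet $\partial\Omega$ (Figure~\ref{fig:cut_cell}). The squared difference $\|w\|^2_{L^2(\Omega)}-\|w\|^{*2}_{L^2(\Omega)}$ is then written as the interior-cell quadrature error plus the cut-cell contribution. Only at the end do I pass from squares to norms, using $|a-b|\le|a^2-b^2|/(a+b)$ as in Step~4 of Theorem~\ref{thm:h12_curve}. In the degenerate case $a+b$ small, I bound $|a-b|$ directly through $\sqrt{|a^2-b^2|}$.

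\textbf{Step 1 (interior cells).} I first extend $w$ by the Stein operator to $Ew\in B^s_p(Q)$ with $\|Ew\|_{B^s_p(Q)}\le C_E\|w\|_B$. The condition $s>2/p$ makes point values meaningful. On each interior cell I replace $w$ by its degree-$(r-1)$ tensor Lagrange interpolant at the $r^2$ grid points of the cell, and I take the weights to be the cell-wise Newton--Cotes weights. (If the weights are genuinely the equal weights $|\Omega|/\tilde m$, I will instead compare with the composite rectangle rule; that only costs rates, and these are capped anyway.) The local Whitney/Besov estimate gives $\|w-I_hw\|_{L^2(T)}\lesssim h^{s-(2/p-1)_+}|Ew|_{B^s_p(T^*)}$. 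Summing over cells with $\ell^p\to\ell^2$ (or Hölder) controls $\bigl|\int w^2-Q_h(w^2)\bigr|\le\|w-I_hw\|_{L^2}(\|w\|_{L^2}+\|I_hw\|_{L^2})$. Since $h\asymp\tilde m^{-1/2}$, this yields the rate $\tilde m^{-(s/2-(1/p-1/2)_+)}$, which is exactly the first entry of the minimum defining $\alpha_\Omega$.

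\textbf{Step 2 (cut cells).} Since $\partial\Omega$ is a $C^2$ curve of length $L$, the number of cut cells is $\lesssim L/h$, so their union has area $\lesssim Lh\asymp\tilde m^{-1/2}$. Both the exact integral of $w^2$ over this strip and the discrete sum over the $\lesssim L/h\cdot r^2$ grid points in it (each weighted by $\asymp h^2$) are bounded by $\|w\|^2_{C(\Omega)}\,Lh$. The embedding $B^s_p\hookrightarrow C$ gives $\|w\|_{C}\lesssim\|w\|_B$. Hence the squared-norm discrepancy from cut cells is $\lesssim\|w\|_B^2\,\tilde m^{-1/2}$, which after the square-root step becomes $\tilde m^{-1/4}$. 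This is the origin of the cap $1/4$ in $\alpha_\Omega$.

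\textbf{Step 3 (weight normalisation and assembly).} The normalisation $|\Omega|/\tilde m$ differs from $h^2$ by the ratio $|\Omega|/(\tilde m h^2)$. This ratio equals $1+O(h)$, since the grid-point count approximates the area up to the boundary strip. It therefore contributes a further $O(\tilde m^{-1/2})\|w\|^2_{C}$ term, again absorbed into the $\tilde m^{-1/4}$ rate.

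\textbf{Main obstacle.} I expect the delicate step to be converting the squared-norm error $\lesssim\|w\|_B^2\tilde m^{-2\alpha_\Omega}$, together with the mixed interior bound $\|w\|_B^2\tilde m^{-\alpha}$, into a bound on the norm difference. When $\|w\|_{L^2}+\|w\|^*_{L^2}\gtrsim\|w\|_B\tilde m^{-\alpha_\Omega}$, dividing by $a+b$ works. Otherwise both $a$ and $b$ are already of that size and the bound is trivial. Some care is also needed so that the interior estimate is stated in a form linear in $\|w-I_hw\|$, so that the rate is not halved.
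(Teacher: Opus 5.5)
Your outer structure matches the paper's, and Steps~2 and~3 are fine: Stein extension, the split into $\mathcal{T}^{\mathrm{int}}$ and $\mathcal{T}^{\mathrm{cut}}$, a boundary strip of area $O(Lh)$ giving a squared error $\|w\|_B^2\tilde m^{-1/2}$ and hence the cap $1/4$, and the $1+O(h)$ weight ratio. The gap is in Step~1. The bound $\bigl|\int w^2-Q_h(w^2)\bigr|\le\|w-I_hw\|_{L^2}(\|w\|_{L^2}+\|I_hw\|_{L^2})$ only controls $\int w^2-\int (I_hw)^2$. Since $I_hw=w$ at the nodes, you do have $Q_h(w^2)=Q_h((I_hw)^2)$. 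But you would also need $Q_h((I_hw)^2)=\int (I_hw)^2$, i.e.\ exactness on products of cellwise interpolants, which have degree $2(r-1)$ in each variable. Newton--Cotes on $r$ equispaced nodes is exact only to degree $r-1$ (or $r$). The equal weights $|\Omega|/\tilde m$, which are what the statement prescribes, integrate only constants exactly on a cell. So the term $\int (I_hw)^2-Q_h((I_hw)^2)$ has been dropped, and for equal weights it is not of lower order. Take $p=\infty$, $s<1$, quadratic cells, and $w=1+w_1$. Here $w_1$ is the row-periodic continuous piecewise quadratic equal to $h^s,0,h^s$ on the three node rows of each cell. Then $\|w\|_{B^s_\infty}\lesssim 1$ and $I_hw=w$, so your bound predicts zero error. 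But the cell mean of $w_1$ is $h^s/3$ while its equal-weight mean is $h^s/2$, so the quadrature error of $w^2$ is $\asymp h^s=\tilde m^{-\alpha}$. For the same reason, your parenthetical claim that equal weights only cost rates which are capped anyway fails. Bounding this term crudely by $\|w\|_\infty\sum_T|T|\,\mathrm{osc}_T(w)\lesssim\|w\|_B^2\tilde m^{-\min(s,1)/2}$ and taking a square root gives $\tilde m^{-\min(s,1)/4}$, which is strictly below $\alpha_\Omega$ whenever $s<1$.

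The same issue reappears in your final step, where the interior term enters as $\|w\|_B^2\tilde m^{-\alpha}$. Set $a=\|w\|_{L^2(\Omega)}$ and $b=\|w\|^*_{L^2(\Omega)}$. Dividing by $a+b\gtrsim\|w\|_B\tilde m^{-\alpha_\Omega}$ leaves $\|w\|_B\tilde m^{\alpha_\Omega-\alpha}$, which does not decay at all when $\alpha\le 1/4$. The best threshold yields only $\tilde m^{-\alpha/2}$, so the rate is halved whenever $\alpha<1/2$. Linearity in the approximation error is not enough; the companion factor must be an $L^2$ norm, not $\|w\|_B$. You need $E_{\mathrm{int}}\lesssim\epsilon(2a+\epsilon)+\|w\|_B^2\tilde m^{-1/2}$ with $\epsilon\lesssim\|w\|_B\tilde m^{-\min(\alpha,1/2)}$. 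Then $E/(a+b)\lesssim\epsilon+\epsilon^2/(a+b)+\|w\|_B^2\tilde m^{-1/2}/(a+b)$, and your threshold argument closes. With equal weights this needs no interpolant:
on each interior cell write $w^2-\bar w_T^2=(w-\bar w_T)(w+\bar w_T)$;
use that the rule integrates constants up to the $1+O(h)$ ratio;
apply Cauchy--Schwarz with $\epsilon:=(\sum_T|T|\,\mathrm{osc}_T(w)^2)^{1/2}$ and $(\sum_T|T|\,\bar w_T^2)^{1/2}\le a+\epsilon$.
The rate for $\epsilon$ comes from the scaled embedding $B^s_p\hookrightarrow C$, $s>2/p$, on cells.

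The paper organises the interior differently. It keeps the interpolation error at the norm level, $\bigl|\|w\|_{L^2(\Omega)}-\|P\|_{L^2(\Omega)}\bigr|\le\|w-P\|_{L^2(\Omega)}$, with $\|P\|^*=\|w\|^*$ because $P(x_i)=w(x_i)$. It then square-roots only \eqref{eq:full_P_bound}, which protects the interpolation term from halving. Note, however, that it takes the $\tilde m^{-2\alpha}$ in \eqref{eq:int_to_disc} from the norm equivalence \cite[Lemma~6.1]{bonito2025}, which gives comparability rather than a small difference. The example above shows that this difference can be of order $\|P\|_{L^2}\,\tilde m^{-\alpha}$. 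So on either route, the piecewise-polynomial quadrature term must be proved in the form $a$ times a small factor and divided by $a+b$, not square-rooted.
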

 
\begin{proof}
The argument extends~\cite[Lemma~6.1]{bonito2025} from the box $Q$ to
the cut domain by isolating the cut-cell contribution.
 
\medskip\noindent\textbf{Setup.}
Let $\hat{w}:=Ew$ be the Stein extension~\cite{Stein1970} with
$\|\hat{w}\|_{B^s_p(\mathbb{R}^2)}\le C_E\|w\|_B$, and
$\|\hat{w}\|_{\C(\bar Q)}\lesssim\|w\|_B$ via
$B^s_p\hookrightarrow\C$ for $s>2/p$. Let
$P:=S^*_k(\hat{w})\in V^r(\mathcal{T}_k)$ be the BDPS interpolant on
the $N=(r\cdot 2^k)^2$ nodes of $G_{k,r}$. By~\cite[Theorem~2.1]{bonito2025},
\begin{equation}\label{eq:approx_P}
  \|\hat{w}-P\|_{L^2(Q)}\;\lesssim\;\|w\|_B\,N^{-\alpha},
  \qquad
  \alpha:=\tfrac{s}{2}-(\tfrac{1}{p}-\tfrac{1}{2})_+,
\end{equation}
and $\|P\|_{\C(\bar Q)}\lesssim\|w\|_B$ by stability of $S^*_k$
(Lebesgue constant $\Lambda_r$). Partition the cells meeting
$\bar\Omega$ into interior cells $\mathcal{T}^{\mathrm{int}}$
(contained in $\Omega$) and cut cells $\mathcal{T}^{\mathrm{cut}}$
(meeting $\bOmg$); set
$\Omega^{\mathrm{int}}:=\bigcup\mathcal{T}^{\mathrm{int}}$ and
$\tilde{X}_{\mathrm{int}}:=G_{k,r}\cap\Omega^{\mathrm{int}}$
(see Figure~\ref{fig:cut_cell}). Since $\bOmg\in\C^2$ has length~$L$,
\begin{equation}\label{eq:cut_count}
  |\mathcal{T}^{\mathrm{cut}}|=O(\tilde m^{1/2}),
  \quad
  |\Omega\setminus\Omega^{\mathrm{int}}|=O(\tilde m^{-1/2}),
  \quad
  \tfrac{|\Omega^{\mathrm{int}}|}{|\tilde X_{\mathrm{int}}|}
  =\tfrac{h^2}{r^2}
  =\tfrac{|\Omega|}{\tilde m}\bigl(1+O(\tilde m^{-1/2})\bigr).
\end{equation}
 
\medskip\noindent\textbf{Interior and cut bounds.}
On $\Omega^{\mathrm{int}}$ (a union of complete cells), the BDPS
norm equivalence~\cite[Lemma~6.1]{bonito2025} gives
$\bigl|\|P\|^2_{L^2(\Omega^{\mathrm{int}})}-(h^2/r^2)\sum_{\tilde X_{\mathrm{int}}}P(x_i)^2\bigr|
\lesssim\|w\|^2_B\,\tilde m^{-2\alpha}$. Substituting~\eqref{eq:cut_count}
and using $P(x_i)=w(x_i)$ at
$\tilde X_{\mathrm{int}}\subset\tilde X$:
\begin{equation}\label{eq:int_to_disc}
  \|P\|^2_{L^2(\Omega^{\mathrm{int}})}
  \;\le\;\|w\|^{*2}_{L^2(\Omega)}\bigl(1+O(\tilde m^{-1/2})\bigr)
   + C\,\|w\|^2_B\,\tilde m^{-2\alpha}.
\end{equation}
On the cut region, H\"older gives
$\|P\|^2_{L^2(\Omega\setminus\Omega^{\mathrm{int}})}
\le\|P\|^2_{\C(\bar Q)}\,|\Omega\setminus\Omega^{\mathrm{int}}|
\lesssim\|w\|^2_B\,\tilde m^{-1/2}$. Adding,
\begin{equation}\label{eq:full_P_bound}
  \|P\|^2_{L^2(\Omega)}
  \;\le\;\|w\|^{*2}_{L^2(\Omega)}\bigl(1+O(\tilde m^{-1/2})\bigr)
   + C\,\|w\|^2_B\,\tilde m^{-\min(2\alpha,\,1/2)}.
\end{equation}
 
\medskip\noindent\textbf{Assembly.}
The uniform bound $\|w\|^*_{L^2(\Omega)}\le|\Omega|^{1/2}\|w\|_{\C(\bar\Omega)}
\lesssim\|w\|_B$ absorbs the ratio correction. Taking square roots
in~\eqref{eq:full_P_bound} and combining with the triangle inequality
$\|w-P\|_{L^2(\Omega)}\le\|\hat w-P\|_{L^2(Q)}\lesssim\|w\|_B\,\tilde m^{-\alpha}$:
\[
  \|w\|_{L^2(\Omega)}\le\|w\|^*_{L^2(\Omega)}
   + C\,\|w\|_B\,\tilde m^{-\min(\alpha,\,1/4)},
\]
the forward direction of~\eqref{eq:prop3} with
$\alpha_\Omega=\min(\alpha,\,1/4)$.
 
For the reverse, splitting
$\tilde X=\tilde X_{\mathrm{int}}\cup(\tilde X\setminus\tilde X_{\mathrm{int}})$:
the interior sum is bounded by the reverse of~\eqref{eq:int_to_disc},
and the cut sum has at most
$r^2|\mathcal{T}^{\mathrm{cut}}|=O(\tilde m^{1/2})$ nodes weighted by
$|\Omega|/\tilde m=O(\tilde m^{-1})$, contributing
$\lesssim\|w\|^2_B\,\tilde m^{-1/2}$ via
$\|P\|_{\C(\bar Q)}\lesssim\|w\|_B$. The same square-root and triangle
argument then gives
$\|w\|^*_{L^2(\Omega)}-\|w\|_{L^2(\Omega)}\le C\,\|w\|_B\,\tilde m^{-\min(\alpha,\,1/4)}$.
\end{proof}

As an analogous robustness check for the interior $L^2$ discretisation,
we consider {\em i.i.d} uniform sampling on $\Omega$. The following
proposition parallels Proposition~\ref{prop:h12_rejection}; it is not
invoked in Theorem~\ref{thm:apriori}, but supports the experimental
observation that the consistent loss remains effective under random
sampling regimes (Experiment~\ref{sec:exp6}).
 
\begin{proposition}[\textbf{Probabilistic $L^2$ discretisation under
{\em i.i.d} uniform sampling}]\label{prop:l2_rejection}
Let $\Omega\subset\mathbb{R}^2$ be a bounded domain and let
$\{x_i\}_{i=1}^{\tilde m}\subset\Omega$ be drawn {\em i.i.d} uniformly.
Set $\|w\|^{*2}_{L^2(\Omega)} := \tfrac{|\Omega|}{\tilde m}
\sum_{i=1}^{\tilde m}|w(x_i)|^2$.
For every $w\in L^\infty(\Omega)$ with $\|w\|_{L^2(\Omega)}>0$ and
every $\delta\in(0,1)$, with probability at least $1-\delta$,
\begin{equation}\label{eq:prop4}
  \bigl|\,\|w\|_{L^2(\Omega)} - \|w\|^*_{L^2(\Omega)}\,\bigr|
  \;\le\; |\Omega|^{1/2}\,\|w\|_{L^\infty(\Omega)}\,(\tilde m\,\delta)^{-1/2}.
\end{equation}
For $w \in U(B^s_p(\Omega))$ with $s > 2/p$, the embedding
$B^s_p \hookrightarrow \C(\bar\Omega)$ replaces $\|w\|_{L^\infty}$
in~\eqref{eq:prop4} by $C(\Omega,s,p)\,\|w\|_B$, yielding the Monte
Carlo rate $\tilde m^{-1/2}$ uniformly over the model class.
\end{proposition}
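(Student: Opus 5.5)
The plan is to apply Chebyshev's inequality to the squared norms and then pass to the norms themselves with an elementary square-root inequality. This mirrors the argument of Proposition~\ref{prop:h12_rejection}, but it is simpler because the estimator is an ordinary sample mean rather than a U-statistic.

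\textbf{Step 1: unbiasedness and variance.} Set $Y_i := |\Omega|\,|w(x_i)|^2$, so that $\|w\|^{*2}_{L^2(\Omega)} = \tilde m^{-1}\sum_i Y_i$. Since the $x_i$ are i.i.d.\ with density $1/|\Omega|$, we have $\mathbb{E}[Y_i] = \|w\|^2_{L^2(\Omega)}$. The variance satisfies
\[
\mathrm{Var}(Y_i)\le \mathbb{E}[Y_i^2] = |\Omega|\int_\Omega |w|^4\,dx \le |\Omega|\,\|w\|^2_{L^\infty(\Omega)}\,\|w\|^2_{L^2(\Omega)}.
\]
By independence, $\mathrm{Var}(\|w\|^{*2}_{L^2(\Omega)})\le |\Omega|\,\|w\|^2_{L^\infty}\|w\|^2_{L^2}/\tilde m$.

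\textbf{Step 2: Chebyshev.} With probability at least $1-\delta$,
\[
\bigl|\,\|w\|^{*2}_{L^2(\Omega)}-\|w\|^2_{L^2(\Omega)}\bigr| \le |\Omega|^{1/2}\,\|w\|_{L^\infty}\,\|w\|_{L^2}\,(\tilde m\delta)^{-1/2}.
\]

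\textbf{Step 3: from squares to norms.} For $a,b\ge 0$ with $a+b>0$ we have $|a-b| = |a^2-b^2|/(a+b)$. Take $a=\|w\|_{L^2}$, which is positive by hypothesis, and $b=\|w\|^*_{L^2}$. Then $a+b\ge \|w\|_{L^2}$, and dividing the bound of Step~2 by $a+b$ cancels the factor $\|w\|_{L^2}$. This yields~\eqref{eq:prop4} exactly, with constant one.

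This square-root step is the main point requiring care. It is precisely why the hypothesis $\|w\|_{L^2(\Omega)}>0$ is imposed. It is also why we use the variance bound $\mathbb{E}[Y^2]\le |\Omega|\,\|w\|_\infty^2\|w\|_2^2$ rather than the cruder $|\Omega|^2\|w\|_\infty^4$: only the former supplies a factor $\|w\|_{L^2}$ to cancel.

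\textbf{Step 4: the model-class statement.} For $w\in U(B^s_p(\Omega))$ with $s>2/p$, the embedding $B^s_p\hookrightarrow \C(\bar\Omega)$ recalled in Section~\ref{sec:functional} (via the Stein extension if needed) gives $\|w\|_{L^\infty}\le C(\Omega,s,p)\,\|w\|_B$. Substituting this into Step~3 gives the uniform rate $\tilde m^{-1/2}$ at fixed $\delta$.
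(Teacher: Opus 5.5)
Your proposal is correct and follows the paper's proof essentially step for step. You use the second-moment bound $\int_\Omega |w|^4 \le \|w\|_{L^\infty}^2\|w\|_{L^2}^2$, apply Chebyshev to the squared discrete norm, divide by $\|w\|^*_{L^2}+\|w\|_{L^2}\ge\|w\|_{L^2}>0$ to cancel the $L^2$ factor, and finish with the Besov embedding. The only difference is that you fold the factor $|\Omega|$ into $Y_i$ before computing the variance, whereas the paper multiplies by $|\Omega|$ after applying Chebyshev; this is purely cosmetic.
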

 
\begin{proof}
Let $X_i := |w(x_i)|^2$ and $S_{\tilde m} := \tilde m^{-1}\sum_{i=1}^{\tilde m} X_i$.
Since the $x_i$ are {\em i.i.d} uniform on $\Omega$,
\[
  \mathbb{E}[X_1]
  = \frac{1}{|\Omega|}\int_\Omega|w|^2
  = \frac{\|w\|^2_{L^2(\Omega)}}{|\Omega|},
  \qquad
  \mathrm{Var}(X_1)
  \le \mathbb{E}[X_1^2]
  \le \frac{\|w\|^2_{L^\infty(\Omega)}\,\|w\|^2_{L^2(\Omega)}}{|\Omega|},
\]
the second bound using
$\int_\Omega|w|^4\le\|w\|^2_{L^\infty}\int_\Omega|w|^2$. Hence
$\mathrm{Var}(S_{\tilde m})=\mathrm{Var}(X_1)/\tilde m\le
\|w\|^2_{L^\infty}\,\|w\|^2_{L^2(\Omega)}/(|\Omega|\,\tilde m)$, and
Chebyshev's inequality yields, with probability at least $1-\delta$,
$|S_{\tilde m}-\mathbb{E}[X_1]|\le
\sqrt{\mathrm{Var}(S_{\tilde m})/\delta}\le
\|w\|_{L^\infty}\,\|w\|_{L^2(\Omega)}/\sqrt{|\Omega|\,\tilde m\,\delta}$.
Multiplying through by $|\Omega|$,
\begin{equation}\label{eq:prop4_sq}
  \bigl|\,\|w\|^{*2}_{L^2(\Omega)} - \|w\|^2_{L^2(\Omega)}\,\bigr|
  \;\le\; |\Omega|^{1/2}\,\|w\|_{L^\infty(\Omega)}\,\|w\|_{L^2(\Omega)}\,
          (\tilde m\,\delta)^{-1/2}.
\end{equation}
The identity $|a-b|(a+b) = |a^2-b^2|$ with
$a = \|w\|^*_{L^2(\Omega)} \ge 0$, $b = \|w\|_{L^2(\Omega)} > 0$ gives
$|a-b| \le |a^2-b^2|/b$. Applied to~\eqref{eq:prop4_sq}, the factor
$\|w\|_{L^2(\Omega)}$ on the right cancels, establishing~\eqref{eq:prop4}.
The Besov special case follows from $\|w\|_{L^\infty(\Omega)} \le
C(\Omega,s,p)\,\|w\|_B$ via the embedding for $s>2/p$.
\end{proof}

\begin{theorem}[\textbf{A\emph{ priori} error bound on cut
domains}]\label{thm:apriori}
Adopt the model classes~\eqref{eq:OR_classes} and the consistent
loss~\eqref{eq:Lc2}, with $r > \max(\bar{s}, 1)$. Let
$\tilde{X} = \{x_i\}_{i=1}^{\mtil}\subset\Omg$ be interior sites
satisfying the discrete $L^2$ comparison
\begin{equation}\label{eq:thm_L2_disc}
  \bigl|\,\norm{w}_{L^2(\Omg)} - \norm{w}^*_{L^2(\Omg)}\,\bigr|
  \;\le\; C_{\mathrm{int}}\,\norm{w}_B\,\mtil^{-\alpha_\Omega},
  \quad
  \alpha_\Omega := \min\!\bigl(\tfrac{s}{2}
    -(\tfrac{1}{p}-\tfrac{1}{2})_+,\,\tfrac{1}{4}\bigr),
\end{equation}
for every $w \in B^s_p(\Omega)$, where
$\norm{w}^{*2}_{L^2(\Omg)} := (|\Omg|/\mtil)\sum_{i=1}^{\mtil}|w(x_i)|^2$;
and let $Z = \{z_j\}_{j=1}^m \subset \bOmg$ be $m$ points equally
spaced in arc length. Then for every $v \in H^1(\Omg)$ with
$\Delta v \in \C(\bar\Omg)$,
\begin{equation}\label{eq:apriori_bound}
  \norm{u - v}_{H^1(\Omg)}
  \;\le\; C\,\Bigl[\, L^*_{\mathrm{sq},2}(v)^{1/2}
    + \bigl(1 + \norm{v}_{\mathcal{U}}\bigr)\,
      \bigl(\mtil^{-\alpha_\Omega} + m^{-\beta}\bigr)\,\Bigr],
\end{equation}
where $\beta := \bar{s} - 1$,
$\norm{v}_{\mathcal{U}} := \max\bigl\{\norm{\Delta v}_B,\,
\norm{\mathrm{Tr}(v)}_{\mathrm{Tr}(B)}\bigr\}$, and the constant $C$
depends on $\Omg$, $L$, $\kappa_{\max}$, $s$, $\bar{s}$, and $r$ but
not on $\mtil$, $m$, $v$, or $u$.
Hypothesis~\eqref{eq:thm_L2_disc} is verified deterministically by
Proposition~\ref{prop:l2_cut} (with $\tilde X = G_{k,r}\cap\Omg$), or
in probability by Proposition~\ref{prop:l2_rejection} with the
sharper rate $\alpha_\Omega = 1/2$.
\end{theorem}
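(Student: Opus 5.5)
The plan is to start from the two-sided energy estimate~\eqref{eq:energy_equiv} applied to the trial function $v$:
\[
\norm{u-v}_{\Hone}\lesssim \norm{f+\Delta v}_{\Hmone}+\norm{g-v}_{\Honehalf}.
\]
Each term on the right is then replaced by its discrete surrogate in $L^*_{\mathrm{sq},2}(v)$, with the discretisation error paid through the hypotheses. The interior residual $w:=f+\Delta v$ lies in $B=B^s_p(\Omg)$ because $f\in\mathcal{F}$ and $\Delta v\in B$. Here I take $\norm{v}_{\mathcal U}<\infty$ implicitly; if it is infinite the bound is trivial. The triangle inequality gives $\norm{w}_B\le 1+\norm{\Delta v}_B\le 1+\norm{v}_{\mathcal U}$.

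For the interior term, the first step is the embedding~\eqref{eq:L2_embed}, which gives $\norm{w}_{\Hmone}\le C_P\norm{w}_{L^2(\Omg)}$. Hypothesis~\eqref{eq:thm_L2_disc} then yields
\[
\norm{w}_{L^2(\Omg)}\le \norm{w}^*_{L^2(\Omg)}+C_{\mathrm{int}}(1+\norm{v}_{\mathcal U})\,\mtil^{-\alpha_\Omega}.
\]
Since $\norm{w}^{*2}_{L^2(\Omg)}=|\Omg|\cdot\frac1{\mtil}\sum_i[\Delta v(x_i)+f(x_i)]^2$, we get $\norm{w}^*_{L^2(\Omg)}\le |\Omg|^{1/2}L^*_{\mathrm{sq},2}(v)^{1/2}$. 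Point evaluation is legitimate because $\Delta v\in\C(\bar\Omg)$ and $f\in\C$ by $s>2/p$.

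For the boundary term, set $e:=g-\mathrm{Tr}(v)$. Since $g\in\mathcal G$ and $\mathrm{Tr}(v)\in\mathrm{Tr}(B)$, we have $\norm{e}_{\mathrm{Tr}(B)}\le 1+\norm{v}_{\mathcal U}$. I would apply the upper estimate~\eqref{eq:thm2-upper} of Theorem~\ref{thm:h12_curve} to $e$, noting that $Z$ is equally spaced in arc length as required. This gives
\[
\norm{e}_{\Honehalf}\lesssim \norm{e}^*_{\Honehalf}+(1+\norm{v}_{\mathcal U})m^{-\beta}.
\]
By Definition~\ref{def:h12}, $\norm{e}^{*2}_{\Honehalf}$ is exactly the boundary part of $L^*_{\mathrm{sq},2}(v)$, so $\norm{e}^*_{\Honehalf}\le L^*_{\mathrm{sq},2}(v)^{1/2}$. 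Adding the interior and boundary bounds and absorbing $C_P$, $|\Omg|$, $C_{\mathrm{int}}$ and the constants of Theorem~\ref{thm:h12_curve} into $C$ gives~\eqref{eq:apriori_bound}. These constants depend on $\kappa_{\max}$ and $L$ through $c_\Gamma$.

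The main obstacle is justifying that Theorem~\ref{thm:h12_curve} applies to $e$, which is not in the unit ball $\mathcal G$. Since the estimates there are homogeneous (norms on both sides scale linearly), I would rescale $e$ by $\norm{e}_{\mathrm{Tr}(B)}$ to place it in the unit ball, or simply note that the proof only uses linearity of $S_k$ and of the Besov seminorm. A second point to check is that the lower-order $L^2$ part of the discrete norm matches: the theorem's discrete norm uses the $1/m$ convention of Definition~\ref{def:h12}, which is consistent with the loss, so no extra factor of $L$ should appear beyond constants.
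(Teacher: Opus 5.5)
Your proposal is correct and follows essentially the same route as the paper. You use the energy estimate, then Poincar\'e plus hypothesis~\eqref{eq:thm_L2_disc} for $f+\Delta v$, then the upper bound~\eqref{eq:thm2-upper} of Theorem~\ref{thm:h12_curve} for $g-\mathrm{Tr}(v)$, and finally absorb the discrete terms into $L^*_{\mathrm{sq},2}(v)$. The only differences are cosmetic: you bound each discrete term separately by $L^*_{\mathrm{sq},2}(v)^{1/2}$ rather than combining them via Cauchy--Schwarz, and you spell out the homogeneity/rescaling needed to apply Theorem~\ref{thm:h12_curve} outside the unit ball, which the paper uses implicitly.
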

 
\begin{proof}
Throughout, $\lesssim$ absorbs constants depending on the Poincar\'e
constant of $\Omg$, the area $|\Omg|$, the arc length $L$, the
curvature $\kappa_{\max}$, the chord-arc constant $c_\Gamma$ of
Lemma~\ref{lem:chord_arc}, $s$, $\bar{s}$, $r$, and the
$C_{\mathrm{int}}$ of~\eqref{eq:thm_L2_disc}.
 
\medskip\noindent\textbf{Step 1: Energy estimate.}
Since $\bOmg \in \C^2$, $\Omg$ is Lipschitz; the difference $u-v$
solves $-\Delta(u-v) = f + \Delta v$ in $\Omg$ with
$(u-v)|_{\bOmg} = g - \mathrm{Tr}(v)$, so the standard
\emph{a priori} bound for the Poisson
equation~\cite[Eq.~(1.4)]{bonito2025} gives
\begin{equation}\label{eq:energy_step}
  \norm{u-v}_{H^1(\Omg)}
  \;\lesssim\; \norm{f+\Delta v}_{H^{-1}(\Omg)}
   + \norm{g-\mathrm{Tr}(v)}_{H^{1/2}(\bOmg)}.
\end{equation}
 
\medskip\noindent\textbf{Step 2: Interior term.}
Poincar\'e on $H^1_0(\Omg)$ gives the embedding
$L^2(\Omg)\hookrightarrow H^{-1}(\Omg)$. Applying~\eqref{eq:thm_L2_disc}
with $w = f + \Delta v$ and using $\norm{f}_B \le 1$,
\begin{equation}\label{eq:interior_chain}
  \norm{f+\Delta v}_{H^{-1}(\Omg)}
  \;\lesssim\; \norm{f+\Delta v}^*_{L^2(\Omg)}
   + (1+\norm{v}_{\mathcal{U}})\,\mtil^{-\alpha_\Omega}.
\end{equation}
 
\medskip\noindent\textbf{Step 3: Boundary term.}
Theorem~\ref{thm:h12_curve} (discrete $H^{1/2}$ equivalence on the
$\C^2$ curve via the \textit{Chord-arc} reduction of
Lemma~\ref{lem:chord_arc}) and $\norm{g}_{\mathrm{Tr}(B)} \le 1$ give
\begin{equation}\label{eq:boundary_chain}
  \norm{g-\mathrm{Tr}(v)}_{H^{1/2}(\bOmg)}
  \;\lesssim\; \norm{g-v}^*_{H^{1/2}(\bOmg)}
   + (1+\norm{v}_{\mathcal{U}})\,m^{-\beta},
\end{equation}
the implicit constant absorbing $c_\Gamma$ and $L$.
 
\medskip\noindent\textbf{Step 4: Assembly.}
Substituting~\eqref{eq:interior_chain}--\eqref{eq:boundary_chain}
into~\eqref{eq:energy_step} and applying Cauchy--Schwarz $a + b \le
\sqrt{2}(a^2+b^2)^{1/2}$ to the two discrete terms,
\[
  \norm{f+\Delta v}^*_{L^2(\Omg)}
  + \norm{g-v}^*_{H^{1/2}(\bOmg)}
  \;\le\; \sqrt{2}\,\bigl(\norm{f+\Delta v}^{*2}_{L^2(\Omg)}
    + \norm{g-v}^{*2}_{H^{1/2}(\bOmg)}\bigr)^{1/2}.
\]
The $L^2$-side equals
$|\Omg|\cdot\mtil^{-1}\sum_i[f(x_i)+\Delta v(x_i)]^2$, so
comparing with~\eqref{eq:Lc2} gives
\[
  \norm{f+\Delta v}^{*2}_{L^2(\Omg)}
  + \norm{g-v}^{*2}_{H^{1/2}(\bOmg)}
  \;\le\; \max(|\Omg|,1)\,L^*_{\mathrm{sq},2}(v),
\]
and absorbing $\sqrt{2\max(|\Omg|,1)}$ into the implicit constant
yields~\eqref{eq:apriori_bound}.
\end{proof}

\begin{remark}[Role of Theorem~\ref{thm:h12_curve}]
Within the proof of Theorem~\ref{thm:apriori}, the geometry of $\bOmg$
enters only in Step~3 via Theorem~\ref{thm:h12_curve}; Steps~1 and~2
hold on any bounded Lipschitz domain once
hypothesis~\eqref{eq:thm_L2_disc} is supplied, the cut-cell geometry
being absorbed in advance by Proposition~\ref{prop:l2_cut}. The novelty
relative to~\cite{bonito2025} is therefore concentrated in the boundary
discretisation, which the \textit{Chord-arc} reduction of
Theorem~\ref{thm:h12_curve} provides.
\end{remark}
 
\begin{remark}[Optimality of the recovery rate]\label{rem:optimality}
The boundary rate $m^{-\beta}$ in Theorem~\ref{thm:apriori} matches
the unrestricted optimal recovery rate
of~\cite[Theorem~3.1]{bonito2025}: the boundary is sampled along the
curve directly, with no cut-cell analogue. The interior rate
$\mtil^{-\alpha_\Omega}$ with $\alpha_\Omega = \min(\alpha,\,1/4)$
reflects a geometric obstruction specific to \emph{tensor-grid}
sampling on $\Omg$: the $O(\mtil^{1/2})$ cells intersecting $\bOmg$
contribute an unavoidable $\mtil^{-1/4}$ error
(Proposition~\ref{prop:l2_cut}). Two routes break this barrier:
\textup{(i)}~sub-cell quadrature on cut cells, as in unfitted finite
element methods~\cite{li2026}, at the cost of explicit mesh
infrastructure; \textup{(ii)}~i.i.d.\ uniform sampling on $\Omg$
(Proposition~\ref{prop:l2_rejection}), which recovers the Monte Carlo
rate $\mtil^{-1/2}$ in probability at no additional mesh cost. The
standard loss $L_{\mathrm{sq},1}$, by contrast, achieves no rate in
$H^1$: its $L^2(\bOmg)$ boundary term does not control
$\norm{g-v}_{H^{1/2}(\bOmg)}$, and the error saturates at the
boundary oscillation undetected by $L^2$
(cf.\ Section~\ref{sec:exp2}).
\end{remark}

\section{Numerical Discussion}
\label{sec:numerics}

Theorem~\ref{thm:h12_curve} guarantees a norm equivalence between
the discrete and continuous $H^{1/2}$ norms on a $\C^2$ boundary,
but it does not tell us how large the practical gain of the
consistent loss over the standard one will be at a finite sample
size. The experiments here address that question. The four loss
functions share the same network, optimiser and training budget
throughout, so any difference in the $H^1$ error comes from the
choice of loss alone.

The networks are implemented in PyTorch~\cite{paszke2019pytorch}
and run on an Intel Core Ultra~7 155H CPU. A single training run
finishes in under $30$~seconds. The two convergence experiments
(Sections~\ref{sec:exp1} and~\ref{sec:exp4}) report the mean $H^1$
error over 10 random seeds per (loss, $\mtil$) pair, with shaded
bands showing the min-to-max envelope. The moving-disk experiment
(Section~\ref{sec:exp3}) uses one fixed seed and varies only the
disk centre. Code, data and scripts to reproduce every figure and
table are at
\url{https://github.com/maneeshkrsingh/consistent_cutpinn}.

\subsection{Setup}
\label{sec:setup}

\paragraph{Manufactured solution.}
All experiments use the same exact solution,
\begin{equation}\label{eq:uexact}
  u(x,y)
  = \sin\left(3.2\,x(x-y)\right)\cos\left(x+4.3y\right)
  + \sin\left(4.6(x+2y)\right)\cos\left(2.6(y-2x)\right),
\end{equation}
from which $f = -\Delta u$ and $g = u|_{\bOmg}$ are obtained by
automatic differentiation. The frequencies in~\eqref{eq:uexact}
are incommensurate, which makes $u$ oscillatory enough to be a
non-trivial test, and $u$ is $C^\infty$, so the convergence rates
we report reflect discretisation rather than limited regularity
of the data.

\paragraph{Optimisation.}
The network is trained with \texttt{torch.optim.LBFGS} for $2000$
outer iterations (\texttt{max\_eval}${}=10$ per step). We use
L-BFGS instead of a first-order method because the consistent loss
landscape, shaped by the discrete $H^{1/2}$ double sum, is better
conditioned for quasi-Newton steps. Adam plateaus earlier on this
objective.

\paragraph{Collocation scaling.}
Interior and boundary counts are linked by
$m = \lfloor\sqrt{\mtil}\rfloor$, which gives the balanced scaling
$m \asymp \mtil^{1/2}$ recommended for $d=2$ in~\cite{bonito2025}.
For $\mtil \in \{100, 400, 900, 1600\}$ the corresponding boundary
counts are $m \in \{10, 20, 30, 40\}$. The computational overhead
of the two consistent losses relative to $L_{\mathrm{sq},1}$ is
negligible: the $O(m^2)$ double sum in~\eqref{eq:h12_semi} costs
less than $1\%$ of total wall-clock time for $m \leq 40$, since
training is dominated by the forward pass and Laplacian evaluation
at $\mtil$ interior points.

\paragraph{Error measurement.}
The $H^1(\Omg)$ error is computed by Monte Carlo quadrature over
$10{,}000$ rejection-sampled interior points, drawn independently
of the training collocation set. The gradient $\nabla v_\theta$ at
the quadrature points comes from automatic differentiation on the
trained network. We report the $H^1$ error rather than the loss
value because, as Section~\ref{sec:exp2} shows, the standard
loss $L_{\mathrm{sq},1}$ can shrink while the $H^1$ error stays
flat. The standard loss does not control the $H^1$ norm.

\paragraph{Domains.}
Two level-set domains are studied throughout:
\begin{itemize}
  \item \textbf{Disk.}
    $\varphi(x,y) = \sqrt{(x-0.5)^2+(y-0.5)^2} - 0.4$,
    inside $[0,1]^2$. The boundary is a circle with constant
    curvature $\kappa = 2.5$, giving a clean test of the method
    on a convex smooth domain.
  \item \textbf{Flower.}
    $\varphi(\rho,\theta) = \rho - 0.12(\sin 5\theta + 3)$,
    centred at $(0.5,0.5)$, where
    $\rho = \sqrt{(x-0.5)^2+(y-0.5)^2}$ and
    $\theta = \mathrm{atan2}(y-0.5,\,x-0.5)$.
    The boundary is non-convex with five-fold symmetry and
    maximum curvature $\kappa_{\max} \approx 18$, providing a
    more demanding test of the \textit{Chord-arc} bound in
    Lemma~\ref{lem:chord_arc}.
\end{itemize}

\begin{figure}[h]
  \centering
  \includegraphics[width=0.8\textwidth]{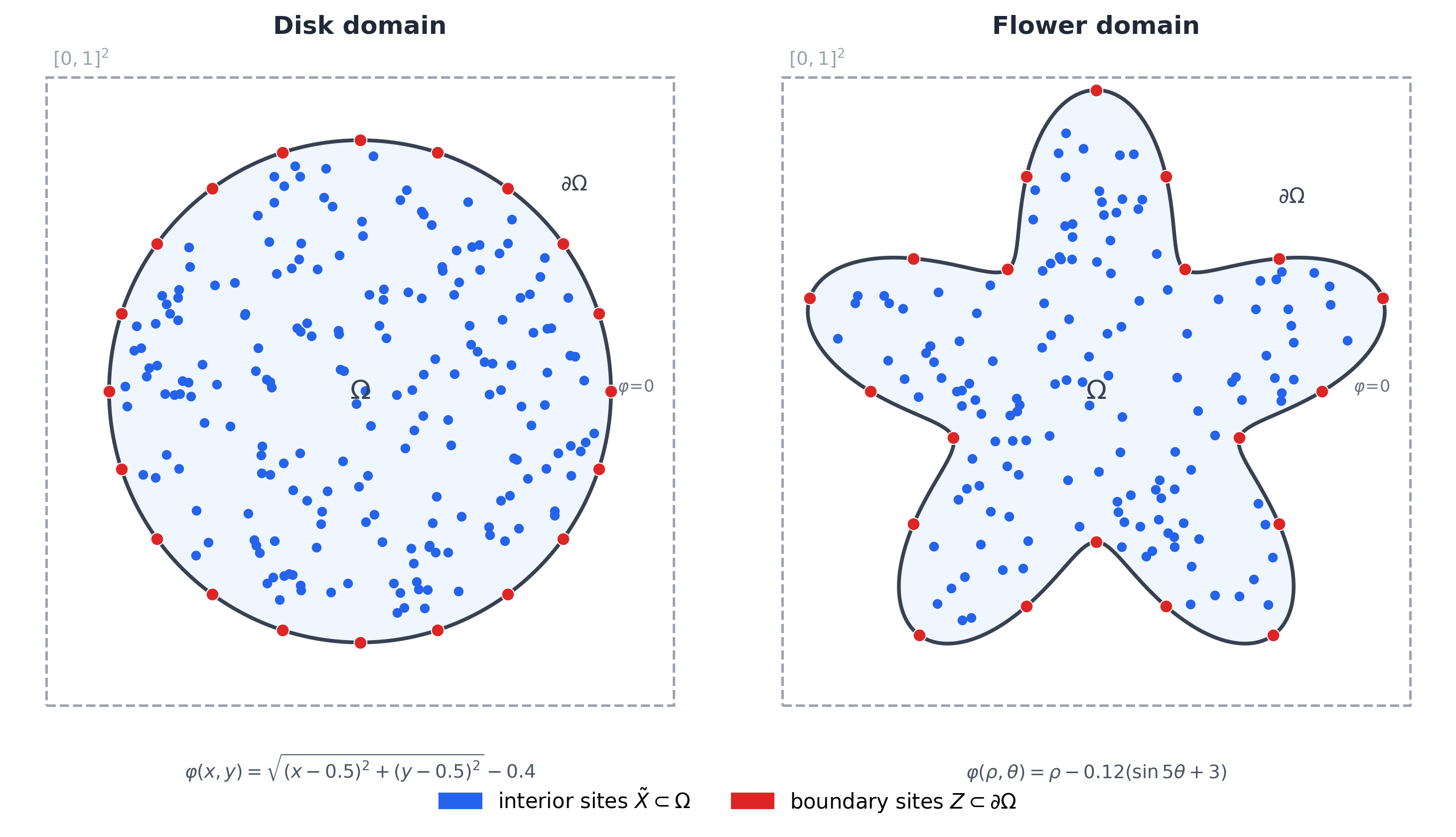}
  \caption{Collocation setup for the two test domains embedded in
    $[0,1]^2$ (dashed). Blue: interior sites $\tilde{X}$ from
    rejection sampling inside $\Omega=\{\varphi<0\}$. Red: boundary
    sites $Z$ at equally-spaced angles on $\partial\Omega$.
    \emph{Left:} disk, $\kappa=2.5$ (convex).
    \emph{Right:} flower with five-fold symmetry,
    $\kappa_{\max}\approx 18$ (non-convex).}
  \label{fig:domains}
\end{figure}

\subsection{Experiments}

%% ═══════════════════════════════════════════════════════════════
%%  EXPERIMENT 1 — Convergence on the disk
%% ═══════════════════════════════════════════════════════════════
\subsubsection{Experiment 1: Convergence on the disk}
\label{sec:exp1}

We vary $\mtil \in \{100,400,900,1600\}$ and run 10 independent
seeds per (loss, $\mtil$) pair, giving 160 runs in total.

\begin{table}[h]
  \centering
  \caption{Mean $H^1$ error (10 seeds) on the disk.}
  \label{tab:exp1}
  \begin{tabular}{lcccc}
    \toprule
    Loss & $\mtil=100$ & $\mtil=400$ & $\mtil=900$ & $\mtil=1600$ \\
    \midrule
    $L_{\mathrm{sq},1} \equiv L_{\mathrm{sq},\lambda}$
      & 1.003 & 0.367 & 0.337 & 0.273 \\
    $L^*_{\mathrm{sq},\gamma}$
      & 1.732 & 0.281 & 0.101 & 0.090 \\
    $L^*_{\mathrm{sq},2}$
      & 1.022 & 0.106 & 0.055 & \textbf{0.056} \\
    \bottomrule
  \end{tabular}
\end{table}

The observed log-log convergence slopes are listed in
Table~\ref{tab:rates}, alongside the slope predicted by
Theorem~\ref{thm:apriori} under rejection sampling
($\alpha=1/2$, boundary term subdominant). The predicted slope is
$-0.50$. The slope of $L^*_{\mathrm{sq},2}$ agrees with this to
within $0.01$. The slope of $L^*_{\mathrm{sq},\gamma}$, at $-0.85$,
beats the prediction. The $L^\gamma$ norm with $\gamma\approx 1.19$
is a better-conditioned surrogate for $H^{-1}$ at these budgets,
which accelerates convergence until the network capacity becomes
the limiting factor. The standard loss $L_{\mathrm{sq},1}$ saturates
at $-0.31$ because its $L^2$ boundary term does not control the
$H^{1/2}$ trace, so the $H^1$ error plateaus before the optimal
rate can be reached.

\begin{table}[h]
  \centering
  \caption{Observed log-log convergence slopes vs.\ theoretical
    prediction. Slopes are least-squares fits over
    $\tilde{m} \in \{400,900,1600\}$ on the disk (10 seeds, mean
    $H^1$ error). Predicted slope $-0.50$ follows from Theorem~\ref{thm:apriori}
    with rejection sampling ($\alpha = 1/2$) and $C^\infty$ data.}
  \label{tab:rates}
  \begin{tabular}{lccc}
    \hline
    Loss & Observed slope & Predicted slope & Consistent? \\
    \hline
    $L_{\mathrm{sq},1} \equiv L_{\mathrm{sq},\lambda}$
      & $-0.31$ & $-0.50$ & No (loss saturates) \\
    $L^*_{\mathrm{sq},\gamma}$
      & $-0.85$ & $-0.50$ & Exceeds prediction  \\
    $L^*_{\mathrm{sq},2}$
      & $-0.49$ & $-0.50$ & Yes ($\pm 0.01$)    \\
    \hline
  \end{tabular}
\end{table}

As expected, $L_{\mathrm{sq},1}$ and $L_{\mathrm{sq},\lambda}$ agree
to machine precision throughout. Figure~\ref{fig:exp1} shows the
mean and min--max envelope over seeds.

\begin{figure}[h]
  \centering
  \includegraphics[width=0.65\textwidth]{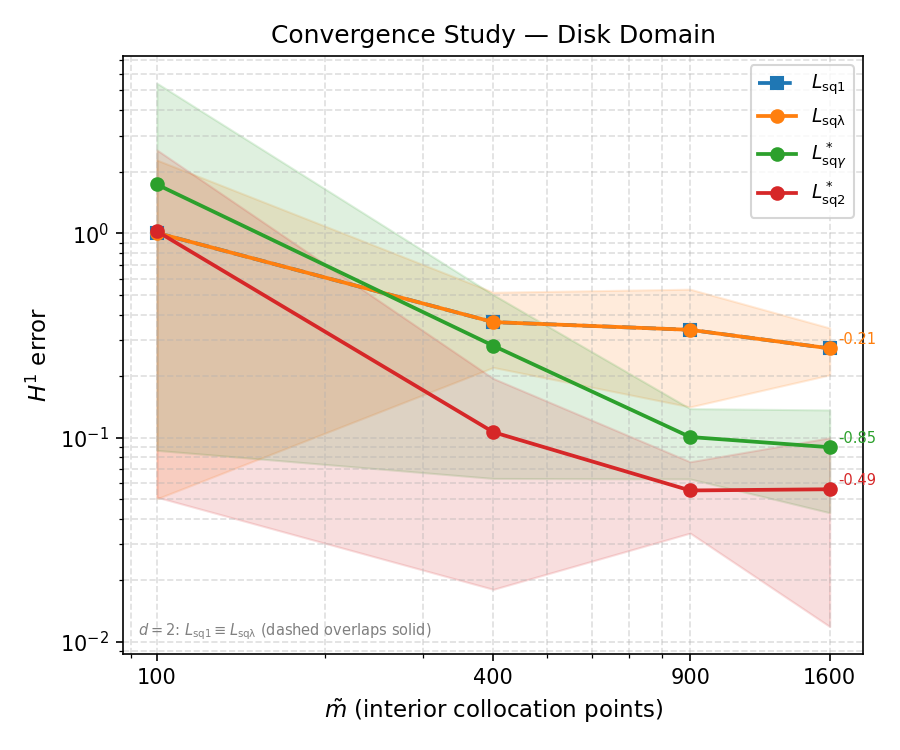}
  \caption{Convergence on the disk (log-log). Solid lines are means
    over 10 seeds, shaded bands span min to max.
    $L_{\mathrm{sq},1}$ (dashed, square markers) is plotted on
    top of $L_{\mathrm{sq},\lambda}$ to confirm the $d=2$
    identity.}
  \label{fig:exp1}
\end{figure}

%% ═══════════════════════════════════════════════════════════════
%%  EXPERIMENT 2 — Training dynamics
%% ═══════════════════════════════════════════════════════════════
\subsubsection{Experiment 2: Training dynamics}
\label{sec:exp2}

Here $\mtil=900$, $m=30$, and all four losses start from the same
network initialisation. Table~\ref{tab:exp2} gives the $H^1$ error
after 2000 L-BFGS steps.

\begin{table}[htb!]
  \centering
  \caption{$H^1$ error after 2000 L-BFGS iterations ($\mtil=900$,
    $m=30$).}
  \label{tab:exp2}
  \begin{tabular}{lc}
    \toprule
    Loss & $H^1$ error \\
    \midrule
    $L_{\mathrm{sq},1} \equiv L_{\mathrm{sq},\lambda}$ & 0.257 \\
    $L^*_{\mathrm{sq},\gamma}$ & \textbf{0.080} \\
    $L^*_{\mathrm{sq},2}$ & 0.102 \\
    \bottomrule
  \end{tabular}
\end{table}

The left panel of Figure~\ref{fig:exp2} plots the $H^1$ error along
the L-BFGS trajectory. $L_{\mathrm{sq},1}$ and $L_{\mathrm{sq},\lambda}$
reach $H^1\approx 0.25$ by iteration~$500$ and stay there for the
rest of training, while the two consistent losses keep decreasing.
The right panel plots the loss values. The standard loss converges
its own objective rapidly, reaching small values of
$L_{\mathrm{sq},1}$, but this small loss does not translate into a
small $H^1$ error. This is the inconsistency phenomenon
of~\cite{bonito2025}, visible here on a curved cut domain.

\begin{figure}[htb!]
  \centering
  \includegraphics[width=0.995\textwidth]{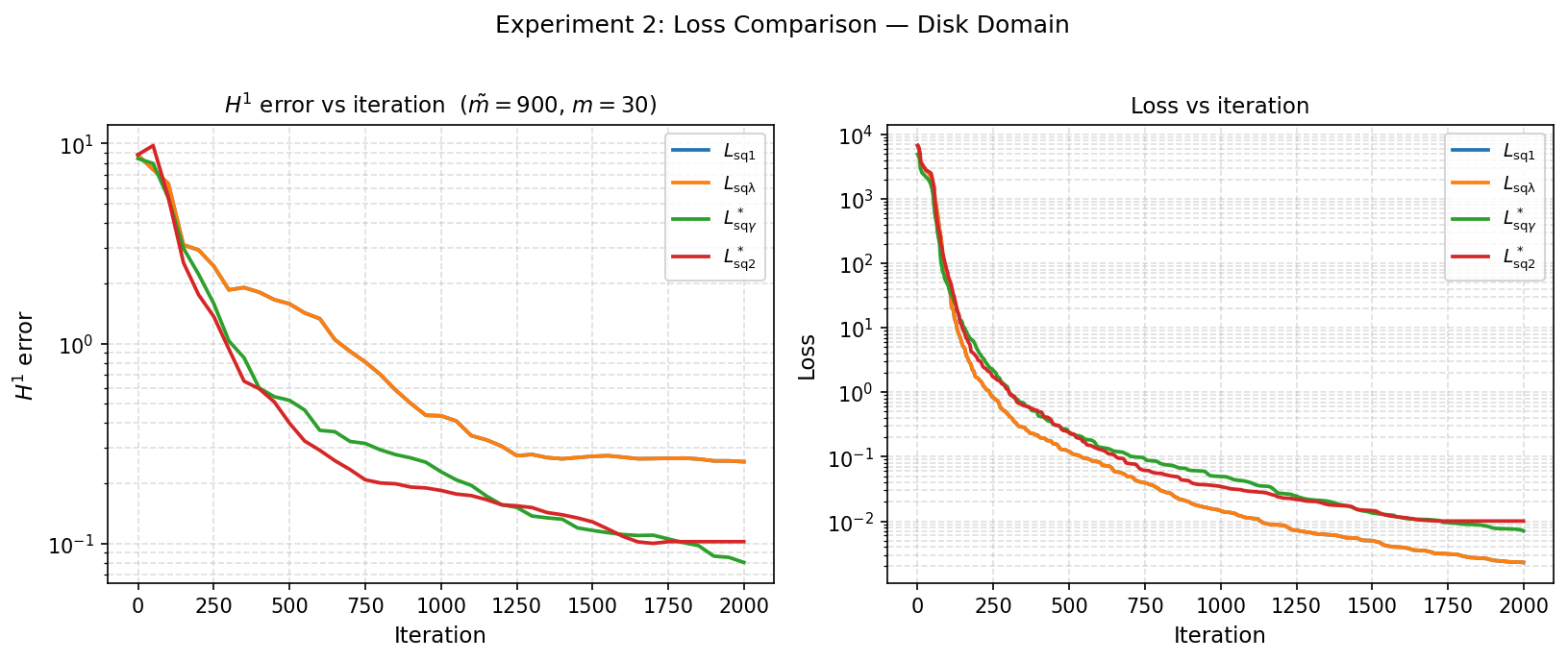}
  \caption{\emph{Left:} $H^1$ error vs.\ iteration (log scale).
    \emph{Right:} loss value vs.\ iteration.
    The standard loss converges its own objective but fails
    to drive down the $H^1$ error.}
  \label{fig:exp2}
\end{figure}

%% ═══════════════════════════════════════════════════════════════
%%  EXPERIMENT 3 — Moving disk (sensitivity to domain position)
%% ═══════════════════════════════════════════════════════════════
\subsubsection{Experiment 3: Sensitivity to domain position}
\label{sec:exp3}

Cut-domain methods are known to suffer when the boundary $\bOmg$
passes close to a mesh point or a collocation site. To probe this,
we move the disk centre along the anti-diagonal $(c,c)$ for
$c\in[0.2,0.8]$ in steps of $0.005$, giving $121$ configurations,
and keep everything else fixed ($\mtil=900$, $m=30$, same random
seed). This is the moving-disk test of~\cite{li2026}, adapted from
their unfitted-FE setting to our collocation setting.

\begin{table}[htb!]
  \centering
  \caption{Statistics of $H^1$ error over 121 disk positions.}
  \label{tab:exp3}
  \begin{tabular}{lccc}
    \toprule
    Loss & Mean $H^1$ & Std $H^1$ & NaN runs \\
    \midrule
    $L_{\mathrm{sq},1} \equiv L_{\mathrm{sq},\lambda}$
      & 0.364 & 0.930 & 0 \\
    $L^*_{\mathrm{sq},\gamma}$
      & 0.116 & 0.067 & 6 \\
    $L^*_{\mathrm{sq},2}$
      & \textbf{0.065} & \textbf{0.036} & 1 \\
    \bottomrule
  \end{tabular}
\end{table}

Diverged (NaN) runs are excluded from the mean and standard
deviation in Table~\ref{tab:exp3}. On average $L^*_{\mathrm{sq},2}$
is $5.6\times$ more accurate and $26\times$ less variable than the
standard loss. The high standard deviation of $L_{\mathrm{sq},1}$
($0.930$, almost three times its mean) comes from isolated spikes
where the $H^1$ error exceeds~$10$. One such spike sits near
$c\approx 0.43$ in the upper panel of Figure~\ref{fig:exp3}. The
six diverged runs of $L^*_{\mathrm{sq},\gamma}$ are concentrated
around $c\approx 0.42$--$0.45$, where the boundary grazes
collocation points and the $L^\gamma$ norm with
$\gamma\approx 1.19$ becomes numerically unstable.
$L^*_{\mathrm{sq},2}$ is stable at $120$ of the $121$ positions.

\begin{figure}[htb!]
  \centering
  \includegraphics[width=0.85\textwidth]{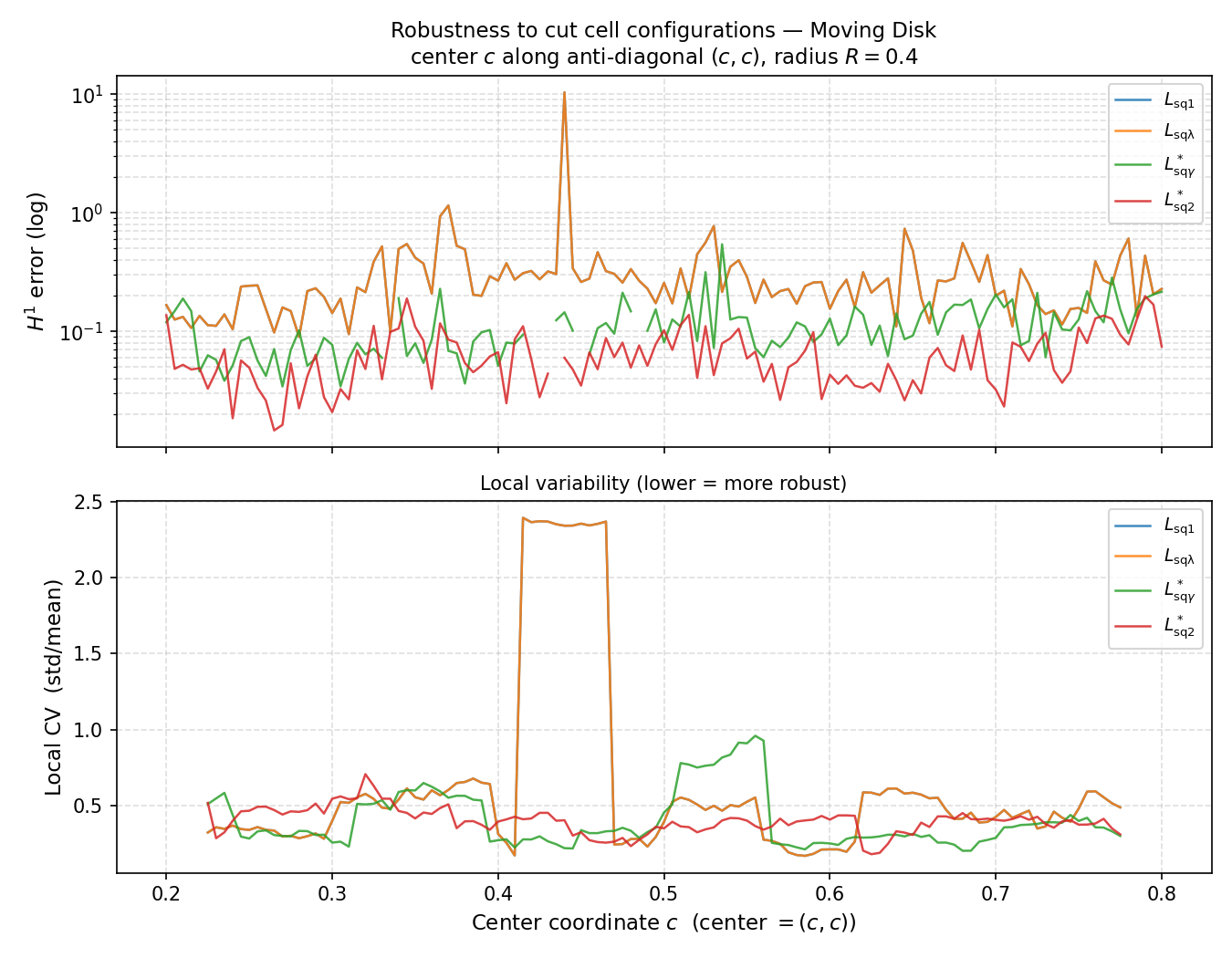}
  \caption{\emph{Upper:} $H^1$ error vs.\ disk-centre coordinate $c$
    (log scale).
    \emph{Lower:} rolling coefficient of variation (window 11).
    $L_{\mathrm{sq},1}$ and $L_{\mathrm{sq},\lambda}$
    overlap exactly (blue hidden under orange, confirming the
    $d=2$ identity) and exhibit large spikes.
    $L^*_{\mathrm{sq},2}$ (red) is flat.}
  \label{fig:exp3}
\end{figure}

%% ═══════════════════════════════════════════════════════════════
%%  EXPERIMENT 4 — Flower domain (convergence on non-convex geometry)
%% ═══════════════════════════════════════════════════════════════
\subsubsection{Experiment 4: Convergence on the flower domain}
\label{sec:exp4}

We repeat the convergence study of Section~\ref{sec:exp1}, now on
the flower domain, to see whether the gains carry over to a more
demanding geometry.
\begin{table}[htb!]
  \centering
  \caption{Mean $H^1$ error on the flower domain.}
  \label{tab:exp4}
  \begin{tabular}{lcccc}
    \toprule
    Loss & $\mtil=100$ & $\mtil=400$ & $\mtil=900$ & $\mtil=1600$ \\
    \midrule
    $L_{\mathrm{sq},1} \equiv L_{\mathrm{sq},\lambda}$
      & 0.906 & 0.522 & 0.420 & 0.561 \\
    $L^*_{\mathrm{sq},\gamma}$
      & 0.946 & 0.392 & 0.208 & 0.146 \\
    $L^*_{\mathrm{sq},2}$
      & 0.737 & 0.140 & 0.073 & \textbf{0.069} \\
    \bottomrule
  \end{tabular}
\end{table}

The improvement is larger here than on the disk. At $\mtil=1600$,
$L^*_{\mathrm{sq},2}$ is $8.1\times$ more accurate than
$L_{\mathrm{sq},1}$, up from $4.9\times$ on the disk. Note that
$L_{\mathrm{sq},1}$ actually gets \emph{worse} between $\mtil=900$
and $\mtil=1600$, with the error rising from $0.420$ to $0.561$.
The standard loss has saturated, and adding more interior points
does not help, because the bottleneck is the loss itself rather
than the discretisation. The consistent loss $L^*_{\mathrm{sq},2}$
continues to decrease (Figure~\ref{fig:exp4}).

\begin{figure}[htb!]
  \centering
  \includegraphics[width=0.990\textwidth]{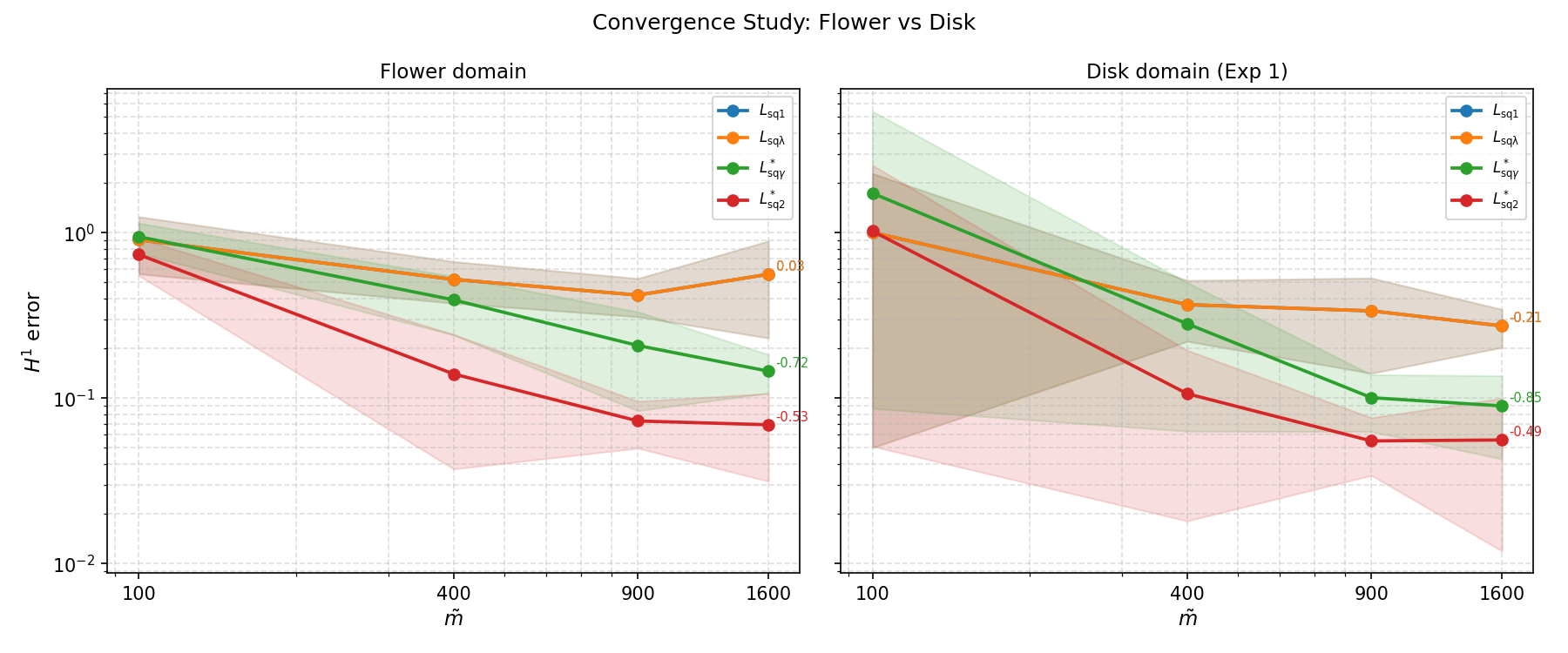}
  \caption{Side-by-side convergence: flower (left) vs.\ disk (right).
    The advantage of the consistent losses is more pronounced
    on the non-convex geometry.}
  \label{fig:exp4}
\end{figure}

%% ═══════════════════════════════════════════════════════════════
%%  EXPERIMENT 5 — Solution quality (spatial error distribution)
%% ═══════════════════════════════════════════════════════════════
\subsubsection{Experiment 5: Spatial error distribution}
\label{sec:exp5}

The previous experiments report only scalar $H^1$ values. Here we
look at the spatial distribution of the pointwise error. We train
$L_{\mathrm{sq},1}$ and $L^*_{\mathrm{sq},2}$ on the same
collocation set ($\mtil=900$, $m=30$, seed~$0$) and evaluate both
networks on $10{,}000$ rejection-sampled interior points.
Figure~\ref{fig:exp5_disk} shows the true solution, the two
approximations, and their pointwise absolute errors on the disk.
Figure~\ref{fig:exp5_flower} shows the same comparison on the
flower. In both cases $L^*_{\mathrm{sq},2}$ gives a visually
smoother approximation and concentrates the residual error near
the boundary, while the error from $L_{\mathrm{sq},1}$ is spread
throughout the interior.

\begin{figure}[htb!]
  \centering
  \includegraphics[width=0.95\textwidth]{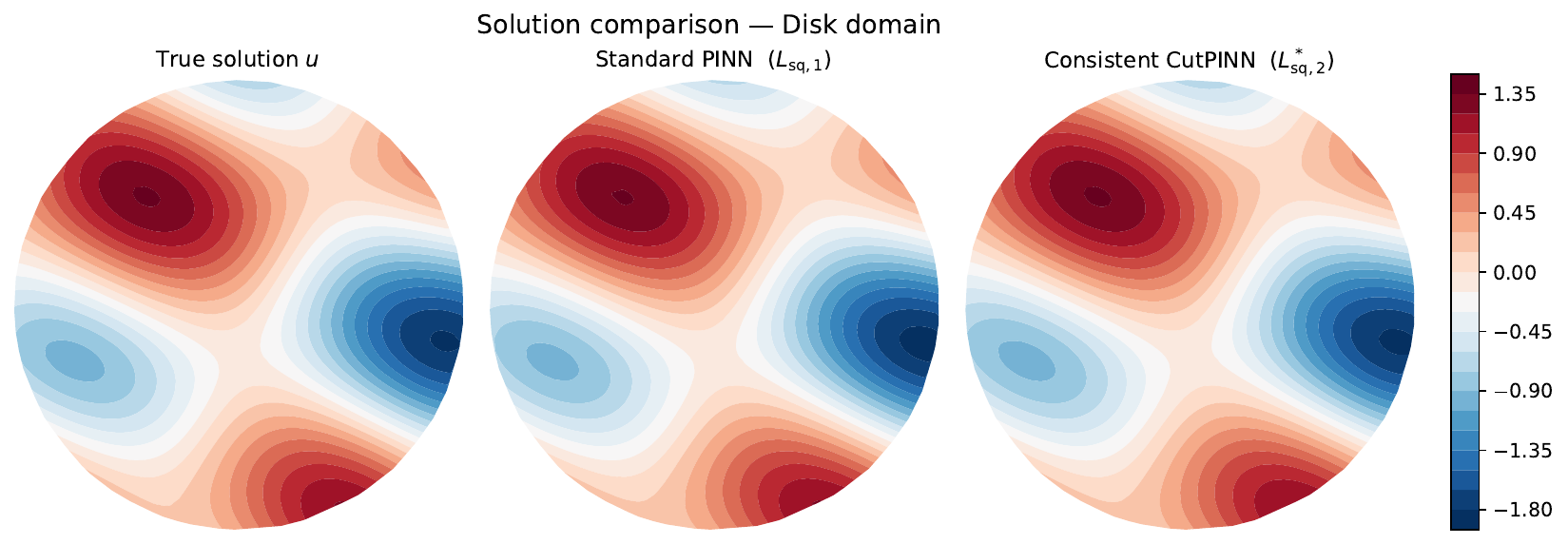}

  \includegraphics[width=0.65\textwidth]{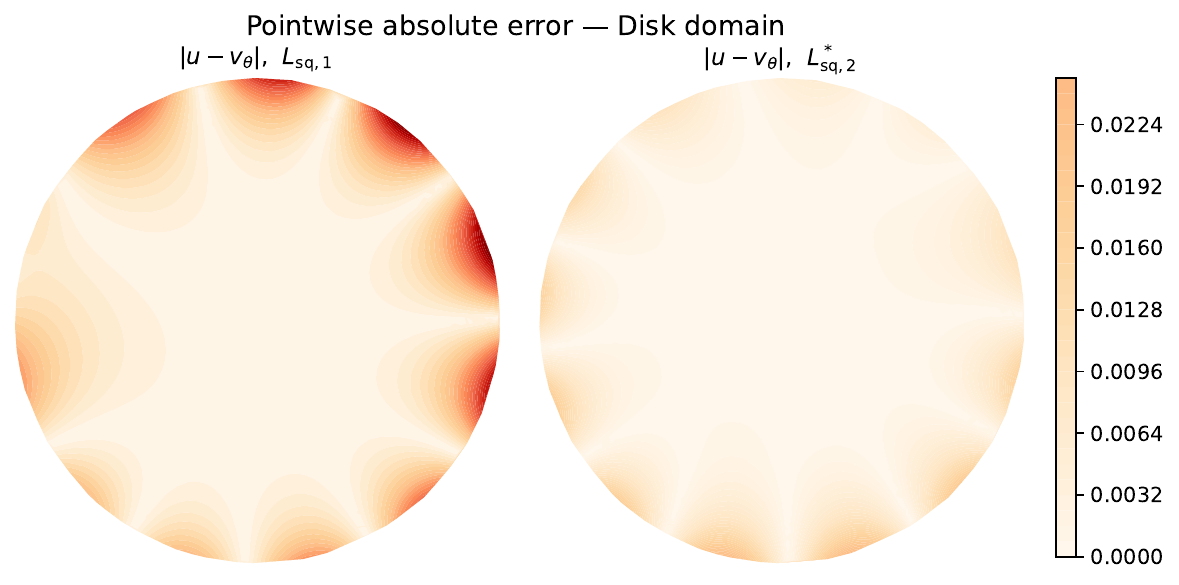}

  \caption{Solution quality on the disk domain ($\mtil=900$, $m=30$).
    \emph{Top row:} true solution $u$ (left), standard PINN
    approximation $v_\theta$ trained with $L_{\mathrm{sq},1}$
    (centre), and consistent approximation trained with
    $L^*_{\mathrm{sq},2}$ (right). The $H^1$ error of each
    approximation is shown above its panel.
    \emph{Bottom row:} pointwise absolute error $|u - v_\theta|$
    for $L_{\mathrm{sq},1}$ (left) and $L^*_{\mathrm{sq},2}$
    (right), sharing a common colorbar. The consistent loss
    reduces the error by $2.5\times$ and confines the remaining
    error to a thin boundary layer.}
  \label{fig:exp5_disk}
\end{figure}

\begin{figure}[htb!]
  \centering
  \includegraphics[width=0.95\textwidth]{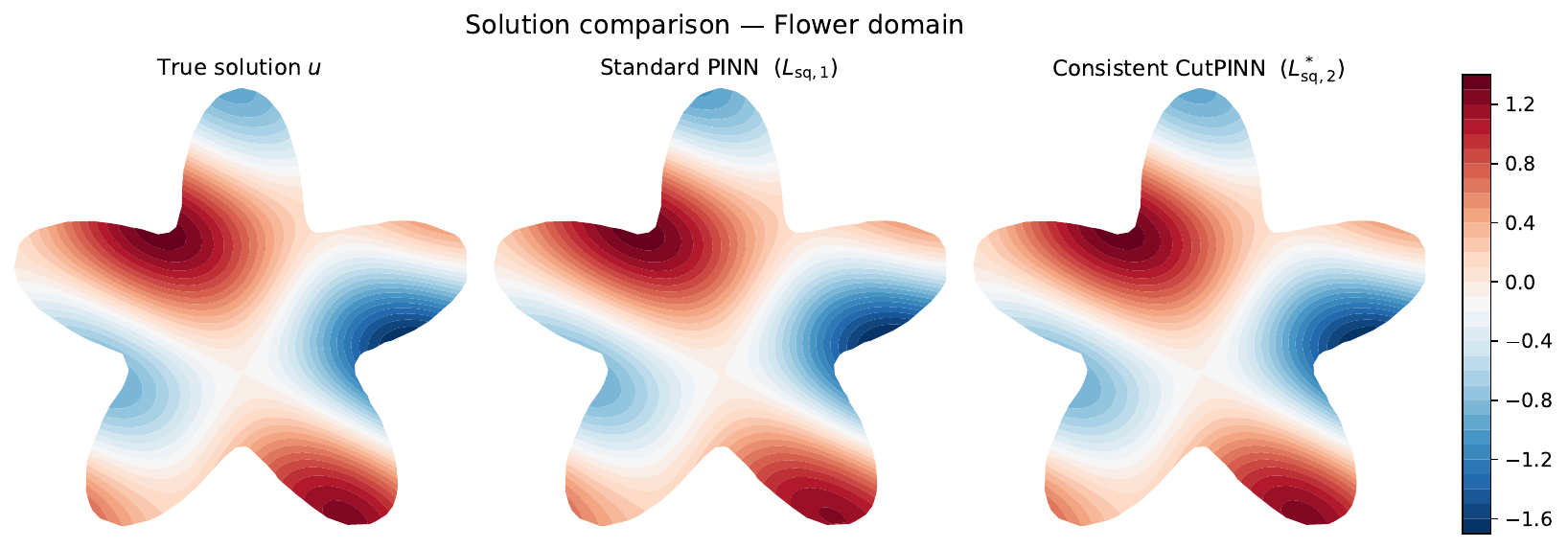}

  \includegraphics[width=0.65\textwidth]{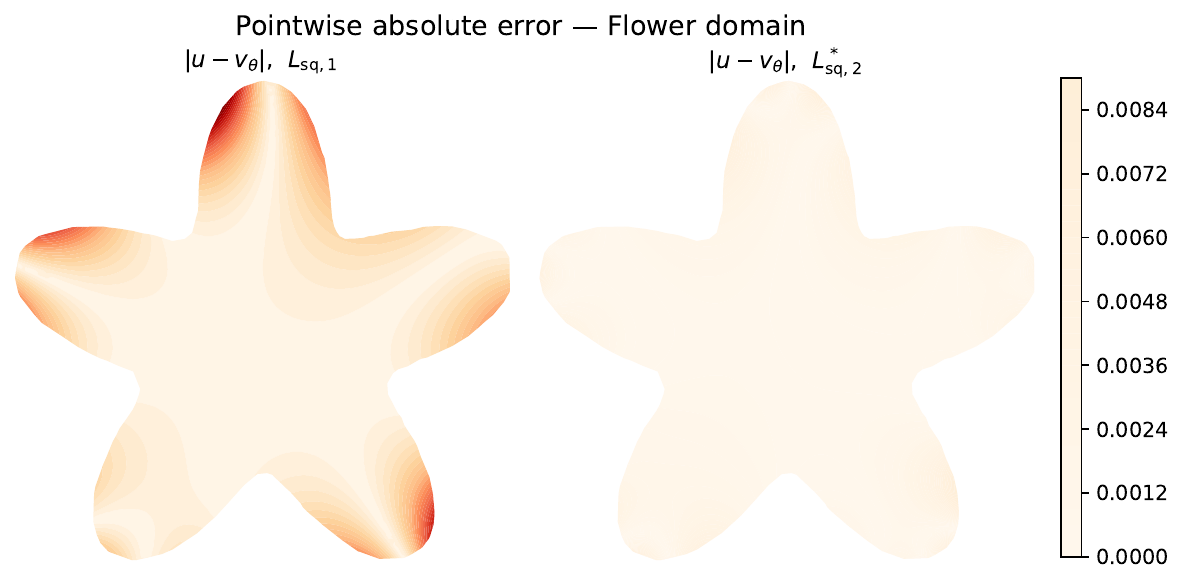}

  \caption{Solution quality on the flower domain ($\mtil=900$, $m=30$).
    Layout as in Figure~\ref{fig:exp5_disk}. The improvement is
    more pronounced on this non-convex geometry: the standard loss
    shows large errors in the concavities between petals, while the
    consistent loss maintains accuracy throughout.}
  \label{fig:exp5_flower}
\end{figure}

%% ═══════════════════════════════════════════════════════════════
%%  EXPERIMENT 6 — Non-uniform boundary sampling
%% ═══════════════════════════════════════════════════════════════
\subsubsection{Experiment 6: Non-uniform boundary sampling}
\label{sec:exp6}

Theorem~\ref{thm:h12_curve} requires the boundary points to be
equally spaced in arc length. To probe robustness to this
assumption, we run this experiment with {\em i.i.d} Monte Carlo
boundary sampling on $\bOmg$. The domain here is the unit disk
$\Omg = \{(x,y): x^2+y^2 < 1\}$ centred at the origin, rather than
the disk centred at $(0.5,0.5)$ used in
Sections~\ref{sec:exp1}--\ref{sec:exp5}. The manufactured
solution~\eqref{eq:uexact} is unchanged, but the domain shift
produces different solution values on $\Omg$.

The boundary points $\{z_j\}_{j=1}^m$ are sampled by
\[
  z = (\cos\theta,\,\sin\theta),
  \qquad \theta\sim\mathcal{U}(0,2\pi),
\]
which gives points that are not equally spaced in arc length
(see Figure~\ref{fig:sampling}). The interior count $\mtil=900$,
the network architecture, and the loss expressions are unchanged.
For training we use Adam for $6{,}000$ epochs with the learning
rate decaying from $10^{-3}$ to $10^{-6}$, followed by L-BFGS
refinement.

\begin{figure}[htb!]
  \centering
  \includegraphics[width=0.35\textwidth]{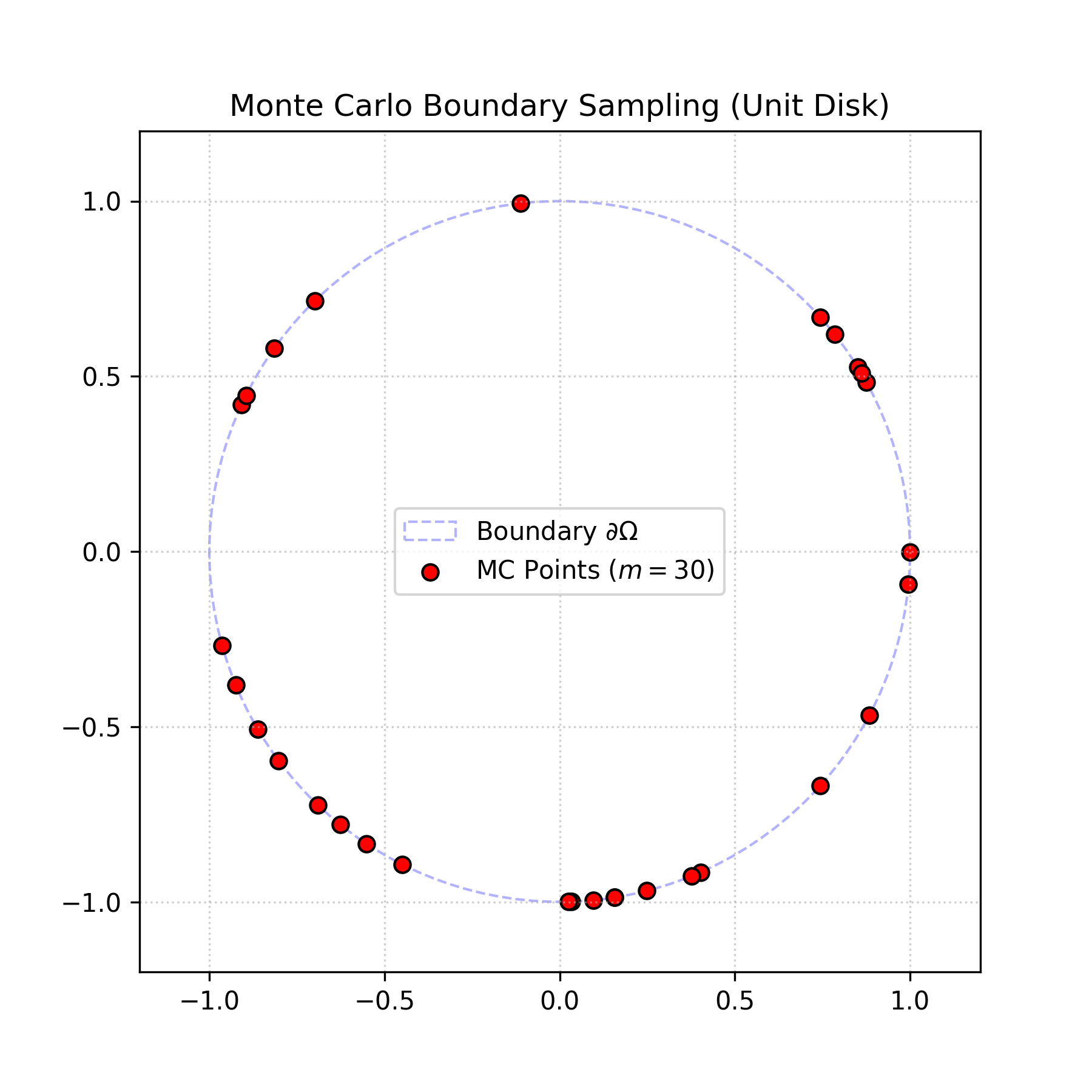}
  \caption{Monte Carlo boundary collocation on the unit disk ($m=30$).
    Points are sampled {\em i.i.d} uniformly from $\bOmg$.}
  \label{fig:sampling}
\end{figure}

\begin{table}[htb!]
  \centering
  \caption{Relative errors on the unit disk with non-uniform
    boundary sampling ($\mtil=900$, $m=30$).}
  \label{tab:exp6}
  \begin{tabular}{lcc}
    \hline
    Method & Relative $L^2$ error & Relative $H^1$ error \\
    \hline
    Standard PINN ($L_{\mathrm{sq},1}$)
      & $4.55 \times 10^{-2}$
      & $4.79 \times 10^{-2}$ \\
    Consistent CutPINN ($L^*_{\mathrm{sq},2}$)
      & $\mathbf{8.04 \times 10^{-3}}$
      & $\mathbf{1.22 \times 10^{-2}}$ \\
    \hline
  \end{tabular}
\end{table}

\begin{figure}[htb!]
  \centering
  \includegraphics[width=0.95\textwidth]{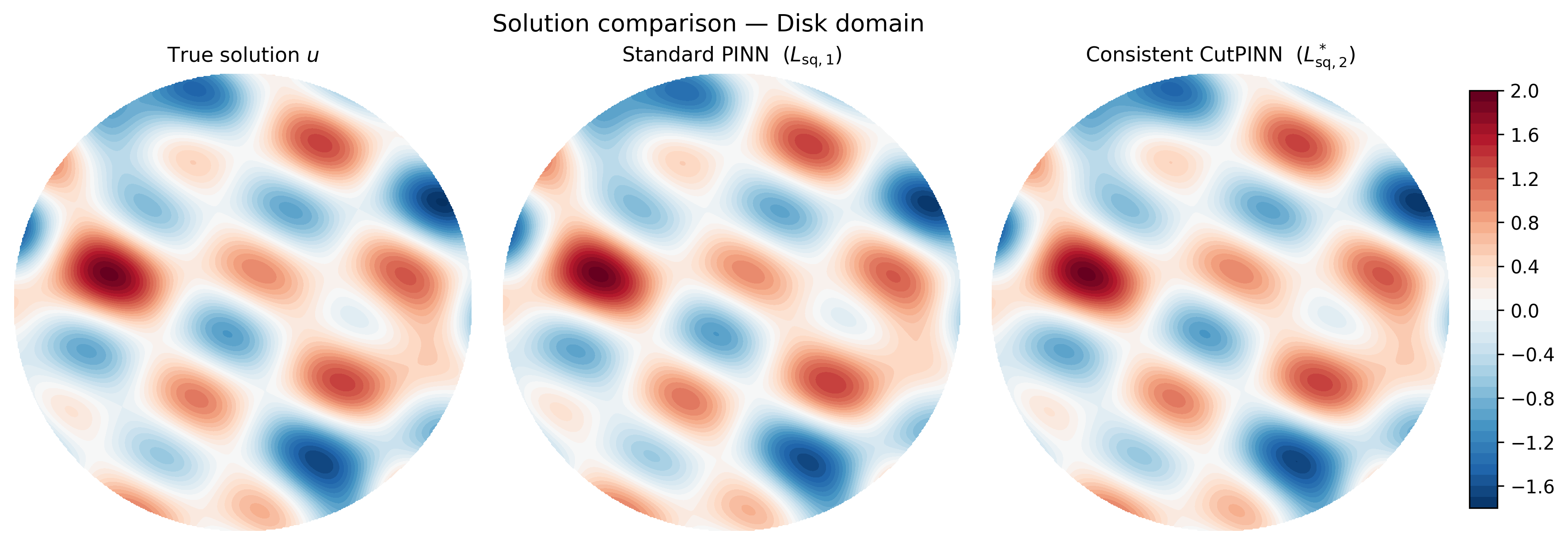}
  \includegraphics[width=0.65\textwidth]{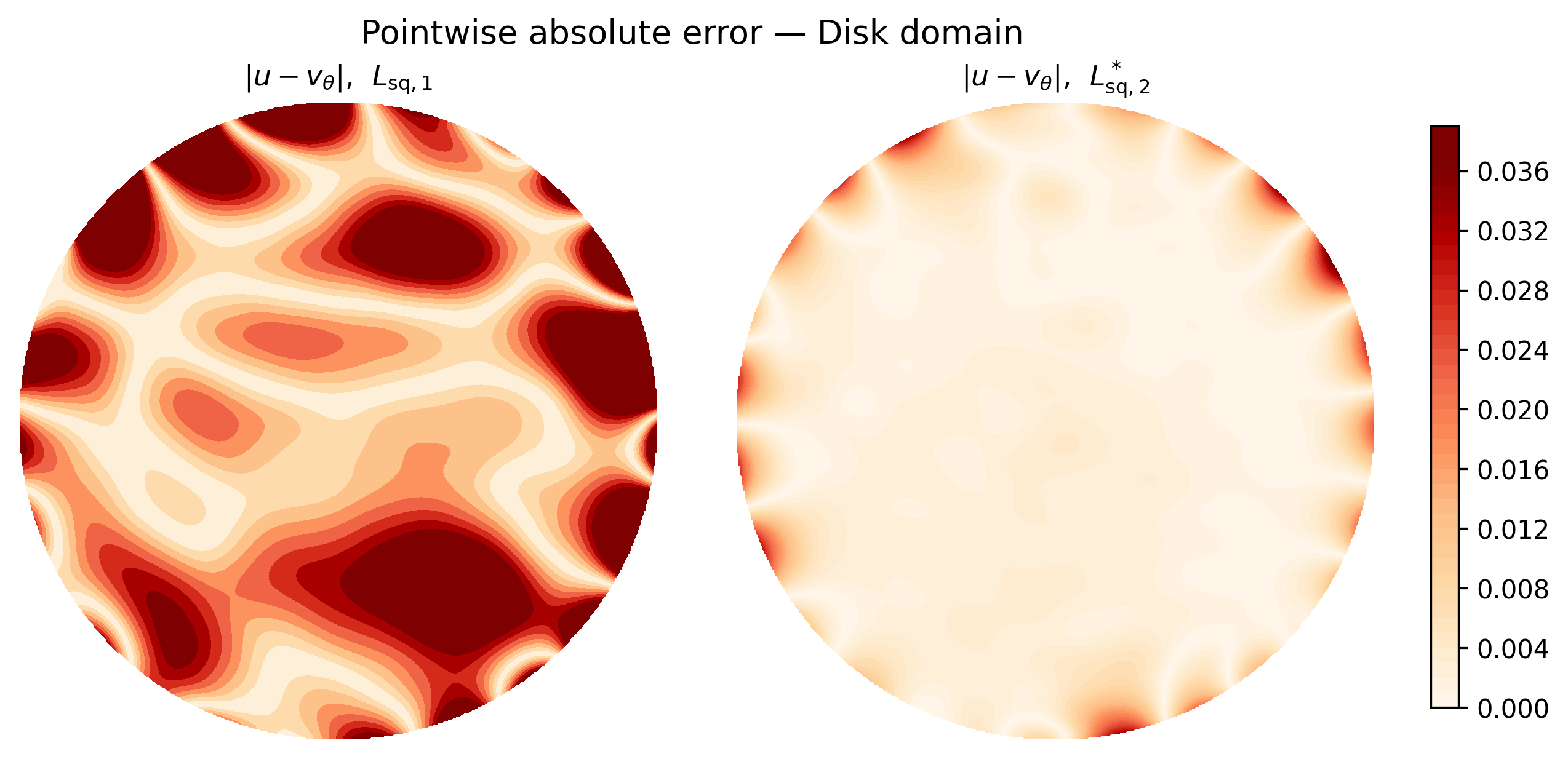}
  \caption{Solution quality on the unit disk with non-uniform boundary
    sampling ($\mtil=900$, $m=30$).
    \emph{Top:} true solution $u$ (left), standard PINN (centre),
    Consistent CutPINN (right).
    \emph{Bottom:} pointwise absolute error.
    The consistent loss reduces the $H^1$ error by $3.9\times$.}
  \label{fig:exp6_disk}
\end{figure}

Table~\ref{tab:exp6} and Figure~\ref{fig:exp6_disk} report a
$3.9\times$ reduction in the $H^1$ error for the consistent loss,
even though the boundary points are not arc-length-equispaced. The
equispacing in Theorem~\ref{thm:h12_curve} is sufficient but not
necessary, consistent with the generalisation to quasi-uniform
distributions noted in Remark~\ref{rem:quasiuniform}.

%% ═══════════════════════════════════════════════════════════════
%%  SUMMARY
%% ═══════════════════════════════════════════════════════════════
\subsection{Summary}
\label{sec:exp_summary}
Table~\ref{tab:summary} collects the headline numbers from the six
experiments above.
\begin{table}[htb!]
  \centering
  \caption{Improvement factors of $L^*_{\mathrm{sq},2}$ over
    $L_{\mathrm{sq},1}$.}
  \label{tab:summary}
  \begin{tabular}{lcc}
    \toprule
    Setting & Factor & Metric \\
    \midrule
    Disk, $\mtil=1600$ (Exp.~\ref{sec:exp1})
      & $4.9\times$ & $H^1$ error \\
    Flower, $\mtil=1600$ (Exp.~\ref{sec:exp4})
      & $8.1\times$ & $H^1$ error \\
    Moving disk, 121 pos.\ (Exp.~\ref{sec:exp3})
      & $5.6\times$ & mean $H^1$ \\
    Moving disk, variability (Exp.~\ref{sec:exp3})
      & $26\times$  & std $H^1$ \\
    Fixed budget, $\mtil=900$ (Exp.~\ref{sec:exp2})
      & $2.5\times$ & $H^1$ error \\
    Unit disk, non-uniform sampling (Exp.~\ref{sec:exp6})
      & $3.9\times$ & $H^1$ error \\
    \bottomrule
  \end{tabular}
\end{table}

Overall, $L^*_{\mathrm{sq},2}$ is $4$ to $8$ times more accurate
than the standard loss and is much more stable across cut-cell
configurations. The $L^\gamma$ variant $L^*_{\mathrm{sq},\gamma}$
is competitive on accuracy but fails on degenerate configurations.
The identity $L_{\mathrm{sq},1} \equiv L_{\mathrm{sq},\lambda}$ at
$d=2$ holds in every run.
\section{Concluding Remarks}
\label{sec:conclusion}

This paper develops the \emph{Consistent CutPINN} framework for
second-order elliptic PDEs on bounded curved two-dimensional
domains defined by a $\C^2$ level-set function. The main result is
Theorem~\ref{thm:apriori}, an \emph{a priori} $H^1$ error bound on
cut domains at the optimal recovery rate. The key technical
ingredient is Theorem~\ref{thm:h12_curve}, a two-sided equivalence
between the discrete and continuous $H^{1/2}$ norms on the curve,
proved by the \textit{Chord-arc} reduction of Lemma~\ref{lem:chord_arc} to
a periodic interval. For the interior term,
Proposition~\ref{prop:l2_cut} handles tensor-product grids
deterministically at rate $\min(\alpha, 1/4)$, where the $1/4$
ceiling reflects an unavoidable cut-cell contribution.
Proposition~\ref{prop:l2_rejection} handles rejection sampling
probabilistically at the Monte Carlo rate $\tilde{m}^{-1/2}$. The
latter rate matches the observed $H^1$ convergence slope of
$-0.49$ in the experiments.

Two further observations come out of the experiments. First, on a
fixed budget the standard loss can converge its own objective while
the $H^1$ error stays put, which is the inconsistency phenomenon
of~\cite{bonito2025} now visible on a curved cut domain. Second,
the standard deviation of the $H^1$ error across the $121$ cut-cell
configurations of the moving-disk test drops by a factor of $26$
when one switches from the standard loss to $L^*_{\mathrm{sq},2}$.

The analysis is restricted to $d=2$. The \textit{Chord-arc} lemma uses a
global arc-length parametrisation of $\bOmg$, which has no direct
counterpart for closed surfaces in $\mathbb{R}^3$, and the extension
to $d\ge 3$ is left to future work. The $H^{1/2}$ double sum costs
$O(m^2)$, which is fine for $m\le 300$ but would need fast summation
at larger scales. Other natural extensions are domains with corners
or cusps (where $\bOmg$ is no longer $\C^2$), convection-dominated
problems on curved interfaces, and time-dependent problems on
evolving level-set domains. These cases are the subject of ongoing
work within the same Consistent CutPINN framework.

\bibliographystyle{plain} 
\bibliography{ctpinn}

\end{document}